\newcommand{\GG}{G}   % an algebraic group
\newcommand{\G}{\mathcal{G}}   % its Lie algebra
\newcommand{\GS}{\mathbf{S}}   % an algebraic torus
\newcommand{\GT}{T}   % an algebraic torus
\newcommand{\GP}{P}   % a parabolic subgroup
\newcommand{\A}{\mathcal{A}} % structurable algebra
\newcommand{\Ss}{\mathcal{S}}
\newcommand{\LL}{\mathcal{L}}
\newcommand{\CC}{\mathbb{C}}
\newcommand{\bt}{\bullet}
\newcommand{\barop}{\overline{\phantom{x}}}
\newcommand{\subhalf}{{\leavevmode \raise.5ex\hbox{\the\scriptfont0 1}\kern-.13em /\kern-.07em\lower.25ex\hbox{\the\scriptfont0 2}}}
\newcommand{\si}{\sigma}
\newcommand{\eps}{\epsilon}
\newcommand{\dash}{\nobreakdash-\hspace{0pt}}
\newcommand{\id}{\mathrm{id}}
\newlength{\dhatheight}
\DeclareMathOperator{\Aut}{Aut}
\DeclareMathOperator{\Char}{char}
\DeclareMathOperator{\End}{End}
\DeclareMathOperator{\cha}{char}
\DeclareMathOperator{\Instrl}{Inn}
\DeclareMathOperator{\ad}{ad}
\DeclareMathOperator{\Lie}{Lie}
\newcommand{\ZZ}{\mathbb Z}
\newcommand{\Gm}{{\mathbb G}_\mathrm{m}}
\DeclareMathOperator{\Cent}{Cent}
\DeclareMathOperator{\Der}{Der}
\newcommand{\lK}{\mathbf{\{}}
\newcommand{\rK}{\mathbf{\}}}
\newcommand{\VK}{V}
\newcommand{\KK}{K}
\newcounter{M}
\newcommand{\Mitem}[1][]{%
        \ifthenelse{\equal{#1}{}}
		{\item[(${\bf M}_{\refstepcounter{M}\theM}$)]}
		{\item[(${\bf M}_{\refstepcounter{M}\label{qitem:#1}\theM}$)]}
}
\newtheorem{theorem}{Theorem}[section]
\newtheorem{lemma}[theorem]{Lemma}
\newtheorem{corollary}[theorem]{Corollary}
\theoremstyle{definition}
\newtheorem{remark}[theorem]{Remark}
\newtheorem*{remark*}{Remark}
\newtheorem{definition}[theorem]{Definition}
\newtheorem{notation}[theorem]{Notation}
\newtheorem{construction}[theorem]{Construction}
\newtheorem*{construction*}{Construction}
\begin{document}

\title[5-Graded simple Lie algebras, structurable algebras,
and Kantor pairs]{5-Graded simple Lie algebras,
structurable algebras, and Kantor pairs}

\author{Anastasia Stavrova}
\address{Chebyshev Laboratory, St. Petersburg State University,
14th Line V.O. 29B, 199178 Saint Petersburg, Russia}
\email{anastasia.stavrova@gmail.com}

\subjclass[2010]{16W10, 17B60, 17B45, 17B50, 17B70, 17C20}

\keywords{structurable algebra, Kantor pair,
5-graded Lie algebra, simple algebraic group, simple Lie algebra\medskip \\
The author is a winner of the contest ``Young Russian Mathematics''. The work was supported
by the RFBR grant 18-31-20044 and the Government of Russian Federation megagrant 14.W03.31.0030.}

\maketitle

\tableofcontents

\begin{abstract}
Relying on the classification of simple Lie algebras over algebraically closed fields of characteristic $>3$,
we show that any finite-dimensional central simple 5-graded Lie algebra over
a field $k$ of characteristic $\neq 2,3$ is a
simple Lie algebra of Chevalley type, i.e. a central quotient of the Lie algebra of a simple algebraic $k$-group.
As a consequence, we prove that all central simple structurable algebras and Kantor pairs over $k$ arise from 5-gradings
on simple Lie algebras of Chevalley type.
\end{abstract}

\maketitle

\section{Introduction}
A $\ZZ$-graded Lie algebra $\LL=\bigoplus_{i\in\ZZ}\LL_i$ is called 5-graded (respectively, 3-graded), if $\LL_i=0$
for all $i$ such that $|i|\ge 3$ (respectively, $|i|\ge 2$). In~\cite{A78,A79} Bruce Allison introduced
a method to construct 5-graded Lie algebras over a field $k$ of characteristic $\neq 2,3$
from a class of non-associative algebras with involution that he named
structurable algebras. This construction generalized the
Tits--Kantor--Koecher construction of 3-graded Lie algebras from Jordan algebras.
Allison showed that a structurable algebra $\A$ is central simple if and only if
the associated 5-graded Lie algebra $K(\A)$ is central
simple, and if, moreover, $\cha(k)=0$, then all central simple isotropic Lie algebras are of the form $K(\A)$. In particular,
this includes simple complex Lie algebras of types $G_2$, $F_4$, and $E_8$ which do not have non-trivial 3-gradings,
and thus cannot be obtained via the Tits--Kantor--Koecher construction.

If $k$ is an algebraically closed field of characteristic $0$, then all simple Lie algebras over $k$ are simple
Lie algebras of Chevalley type, that is, the simple
Lie algebras having a Chevalley basis with the same multiplication table as
the simple complex Lie algebras corresponding to the Dynkin diagrams of types $A_l-G_2$. If $\cha(k)>3$, these Lie algebras are also simple
except for the case $A_l$ with $\cha(k)|l+1$, in which case the central quotient of the corresponding Lie algebra is simple.
If $k$ is a not necessarily algebraically closed field of characteristic $\neq 2,3$, we will say that a finite-dimensional
central simple Lie algebra
$\LL$ over $k$ is of Chevalley type,
if it is isomorphic to one of the Chevalley type Lie algebras in the above sense after tensoring
with an algebraic closure of $k$. We will also call a Chevalley type Lie algebra isotropic, if it has a non-trivial
$\ZZ$-grading; this is equivalent to its automorphism group being an isotropic algebraic group in the sense of~\cite{Bo},
and agrees with Allison's definition of an isotropic Lie algebra. Over an algebraically closed field, every simple
Lie algebra of Chevalley type is isotropic.

In 2008 A. Premet and H. Strade~\cite{PrStr-VI} completed the
classification of finite-dimensional simple Lie algebras over an algebraically closed field of characteristic $>3$.
The resulting Block--Wilson--Premet--Strade classification theorem~\cite{Strade-bookI} establishes
that such a Lie algebra is either a simple Lie algebra of Chevalley type, or a Cartan type Lie algebra in characteristic $p>3$,
or a Melikian type Lie algebra in characteristic $p=5$. It is thus natural to ask which simple Lie algebras
in this list are of the form $K(\A)$, where $\A$ is a simple structurable algebra.
Analyzing possible $\ZZ$-gradings on Cartan and Melikian type Lie algebras,
we are able to establish that
simple structurable algebras are associated with simple Lie algebras of Chevalley type only.

\begin{theorem}\label{thm:1}
Let $k$ be a field of characteristic $\neq 2,3$, and let $\LL$ be a finite-dimensional
central simple Lie algebra over $k$. Then the following
are equivalent.
\begin{enumerate}
\item $\LL$ is an isotropic simple Lie algebra of Chevalley type.                                           %3
\item $\LL$ has a non-trivial $5$-grading.                                                       %2
\item There is a central simple structurable algebra $\A$ over $k$ such that $\LL\cong K(\A)$. %1
\end{enumerate}
\end{theorem}

The implication $\mathit{(3)\implies (2)}$ of Theorem~\ref{thm:1} is obvious.
The implication $\mathit{(2)\implies (1)}$ is proved in Theorem~\ref{thm:Che-type} of \S~\ref{sec:5-gr}.
The implication $\mathit{(1)\implies (3)}$ is obtained in the very end of~\S~\ref{sec:5-class} as a corollary of
Theorem~\ref{thm:5-class}.

Apart from the characteristic $0$ case, Theorem~\ref{thm:1} has been known in the following cases.

If $\A$ is a central simple structurable algebra with trivial involution, then
it is a Jordan algebra, and $K(\A)$ is 3-graded. A theorem of A. Premet~\cite{Pre83,Strade-bookI} establishes
that any central simple $3$-graded Lie algebra over a field of characteristic $>3$ is a Chevalley type Lie algebra.
Alternatively, the fact that $K(\A)$ is
of Chevalley type follows from O. Loos' theory of algebraic groups defined by Jordan pairs~\cite{Lo-pairs,Lo-homog}.
This theory, moreover, extends to arbitrary characteristic, provided one uses
quadratic Jordan algebras instead of the usual ones.

Assuming $k$ is a field of characteristic $\neq 2,3,5$,  B. Allison and O. Smirnov~\cite{A78,Smi90}
classified all central simple structurable $k$-algebras
by direct methods, partially relying on the classification of simple Jordan algebras but
not on the classification of simple Lie algebras in positive characteristic. Given that the resulting list of central simple structurable algebras
is uniform in all characteristics, it is possible to conclude via case-by-case analysis that the	 central simple Lie algebras $K(\A)$
are exactly the central simple Lie algebras of Chevalley type, although this was not explicitly done in the above works.
In~\cite{St-thes,BdMS} we gave
a uniform, classification-free proof that any 5-graded central simple Lie algebra over an arbitrary field of characteristic
$\neq 2,3,5$ is of Chevalley type, extending the argument used in the above-mentioned theorem of A. Premet.
The same result also follows from~\cite[Theorem 3.1]{GGLN}, which shows that if $\cha(k)\neq 2,3,5$ then the Lie
algebras $K(\A)$ are non-degenerate, and the well-known fact (seemingly, due
to A. I. Kostrikin and A. Premet; see e.g.~\cite{Wil76} and~\cite[Corollary 12.4.7]{Strade-bookII}) that the only non-degenerate
Lie algebras in the Block--Wilson--Premet--Strade classification are the Chevalley type Lie algebras.

Apart from the above result, we have obtained in~\cite[Theorem 4.3.1]{BdMS} a
more precise version of Theorem~\ref{thm:1} for structurable division algebras. Namely, we established a
bijective correspondence between isotopy classes of central simple
structurable division algebras over any field $k$ of characteristic $\neq 2,3$, and classes of graded-isomorphic central
simple Chevalley type Lie algebras
of isotropic rank 1 over $k$. This correspondence does not extend to all central simple structurable algebras,
since Chevalley type Lie algebras of isotropic rank $>1$ may have several distinct 5-gradings corresponding to non-isomorphic
structurable algebras. This is evidenced by the second main result of the present paper, Theorem~\ref{thm:5-class},
that classifies all 5-gradings on central simple
Chevalley type Lie algebras $\LL$ over algebraically closed fields of characteristic $\neq 2,3$ that
give rise to an isomorphism $\LL\cong K(\A)$. This classification may serve as a first step towards
an alternative proof of the Allison--Smirnov classification theorem, eliminating the characteristic $\neq 5$ restriction.

Structurable algebras and 5-graded Lie algebras are also closely related to Kantor pairs~\cite{Kantor-cert,AF99,AllFauSmi,GGLN}.
Namely, every Kantor pair $(K_+,K_-)$ identifies with the pair of subspaces $(\G_{-1},\G_1)$
of a 5-graded Lie algebra $\G=\G(K_+,K_-)$,
called the standard graded embedding of $(K_+,K_-)$ in~\cite{AF99}, with a trilinear operation corresponding to the triple commutator
$[[x,y],z]$ in $\G$; see~\S~\ref{sec:ka-str} for the details. This makes our results
applicable to the study of simple Kanor pairs.
In particular, as a corollary of the implication $\mathit{(2)\implies (1)}$ of Theorem~\ref{thm:1}, we prove in
Corollary~\ref{cor:ka-degen} that
every simple Kantor pair over a field of characteristic $\neq 2,3$ is non-degenerate,
removing the restriction of invertibility of $5$ in~\cite[Theorem 3.1]{GGLN}.

All commutative rings we consider are assumed to be associative and unital. All algebras are finite-dimensional
over the respective scalars unless explicitly mentioned otherwise.

\section{Preliminaries on $\ZZ$-graded Lie algebras}

	%the endomorphism

\begin{definition}
Let $R$ be a commutative ring, and let $\LL=\bigoplus\nolimits_{i\in\ZZ}\LL_i$ be a $\ZZ$-graded Lie algebra over $R$.
We say that $\LL$ is \emph{$(2n+1)$-graded}, if $\LL_i=0$ for all $i\in\ZZ$ such that $|i|>n$. The grading is~\emph{non-trivial},
if $\LL\neq\LL_0$.
\end{definition}

To simplify the notation, we will denote by $\Aut(\LL)$ the group of $R$-Lie algebra automorphisms
and by $\Der(\LL)$ the ring of $R$-Lie derivations of $\LL$, omitting the reference to $R$.
We also occasionally consider $\Aut(\LL)$ and $\Der(\LL)$ as functors on the category of $R$-algebras $R'$,
so that
\[
\Aut(\LL)(R')=\Aut_{R'}(\LL\otimes_R R'),\qquad \Der(\LL)(R')=\Der_{R'}(\LL\otimes_R R').
\]
Note that $\Aut(\LL)$ and $\Der(\LL)$ are naturally represented by closed $R$\dash subschemes of the affine $R$\dash scheme
of $R$-linear endomorphisms of $\LL$.

\begin{definition}\label{def:gradingtorus}
Let $\LL=\bigoplus\limits_{i\in\ZZ}\LL_i$ be a $\ZZ$-graded Lie algebra over a commutative ring $R$.
Consider  the 1-dimensional split $R$\dash subtorus $\GS\cong\Gm$ of $\Aut(\LL)$
defined as follows: for any $R$\dash algebra $R'$, any $t\in\Gm(R')$,
and any $i\in\ZZ$, $v\in\LL_i\otimes_{R} R'$, we set $t\cdot v=t^iv$. We call $\GS$
\emph{the grading torus of $\LL$}.
\end{definition}

\begin{definition}
$\LL=\bigoplus\limits_{i\in\ZZ}\LL_i$ be a $\ZZ$-graded Lie algebra over a commutative ring $R$.
The \emph{grading derivation}
on $\LL$ is the derivation $\zeta\in\Der(\LL)$ such that one has
\[
\zeta(x)=i\cdot x\quad\mbox{for any $i\in\ZZ$ and any $x\in\LL_i$}.
\]
If $\LL$ contains an element $\zeta$ such that $\ad(\zeta)$ is the grading derivation, we occasionally call $\zeta$
a grading derivation of $\LL$ by abuse of language.
\end{definition}

\begin{definition}\label{def:algebraic}
Let $\LL$ be a finite-dimensional $5$\dash graded Lie algebra over a commutative ring $R$ such that $2,3\in R^\times$.
We say that $x\in\LL$ is \emph{algebraic}, if the endomorphism
$\exp(x)=\sum\limits_{i=0}^4\frac 1{i!}\ad(x)^i$ is an automorphism
of $\LL$ as an $R$-Lie algebra. We say that $\LL$ is \emph{algebraic},
if all elements  $x\in\LL_i$, $i\neq 0$, are algebraic.
\end{definition}

\begin{remark}\label{rem:algebraic}
By~\cite[Lemma 3.1.7]{BdMS} any 3-graded Lie algebra, and hence any Jordan algebra over $R$ with $2,3\in R^\times $
is algebraic; any 5-graded Lie algebra, and hence any structurable algebra over $R$ with $2,3,5\in R^\times$ is algebraic.
(Despite that lemma is stated for algebras over a field, the proof is also valid over commutative rings.)
By~\cite[Theorem 4.2.8]{BdMS} any central simple structurable {\em division} algebra
over a field of characteristic $\neq 2,3$ is algebraic.
\end{remark}

\begin{definition}
Let $\LL$ be a Lie algebra. An element $x\in\LL$ is called an \emph{absolute zero divisor}, or a~\emph{sandwich},
if $[x,[x,\LL]]=0$. The Lie algebra $\LL$ is called~\emph{non-degenerate}, if it contains no non-zero absolute zero divisors.
\end{definition}

The following lemma elaborates on the proof of~\cite[Theorem 3.1]{GGLN}.
\begin{lemma}\label{lem:alg-nondegen}
Let $\LL$ be a simple 5-graded Lie algebra over a commutative ring $R$ such that $2,3\in R^\times$. If $\LL$ is algebraic,
it is non-degenerate.
\end{lemma}
\begin{proof}
Since $\LL$ is simple, $R$ is in fact a field. Let $U\subseteq\LL$ be a linear span of all absolute zero divizors.
Then $U$ is invariant under all Lie algebra automorphisms of $\LL$.
It is straightforward to see that $U$ is a Lie subalgebra. We claim that $U$ is a graded Lie subalgebra.
Indeed, by~\cite[Theorem 2.3]{GGLN} we have
$$
U=(U\cap(\LL_{-1}\oplus\LL_1))\oplus (U\cap(\LL_{-2}\oplus\LL_0\oplus\LL_2)).
$$
Since the characteristic of $R$ is $\neq 2,3$,
it contains at least 4 distinct invertible elements. Considering the action of the grading torus on
$U\cap(\LL_{-1}\oplus\LL_1)$ and $U\cap(\LL_{-2}\oplus\LL_0\oplus\LL_2)$, one readily sees that they contain
homogeneous components of all their elements.

Since $\LL$ is algebraic, $\exp(x)=\sum\limits_{i=0}^4\frac 1{i!}\ad(x)^i$ is a Lie automorphism of $\LL$ for all
$x\in\LL_i$,
$i\neq 0$. Considering the homogeneous components of of $\exp(x)(u)$ for an $u\in U_i$, $-2\le i\le 2$,
we see that $[x,u]\in U$. Since $\LL$ is simple, it is generated by $\LL_i$, $i\neq 0$
(see e.g.~\cite[Lemma 4.1.5]{BdMS}). Hence $U$ is an ideal of $\LL$,
and hence $U=0$ or $U=\LL$.
By~\cite[Theorem 1]{Zel-abs} $U$ is nilpotent, hence $U=0$.
\end{proof}

The following results relate algebraicity to the classification of simple Lie algebras.

\begin{theorem}\cite[Theorem 4.1.8]{BdMS}\label{thm:BdMS-premet}
Let $\LL$ be an algebraic central simple $5$\dash graded Lie algebra over
a field $k$ of characteristic different from $2,3$, such that $\LL\neq\LL_0$.
Then the algebraic $k$\dash group $\GG=\Aut(\LL)^\circ$
is an adjoint absolutely simple group of $k$\dash rank $\geq 1$, satisfying $\LL=[\Lie(\GG),\Lie(\GG)]$.
\end{theorem}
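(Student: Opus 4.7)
The plan is to exhibit $\GG=\Aut(\LL)^\circ$ as a connected reductive algebraic group by producing inside it a rank-one split torus and two opposite unipotent subgroups from the grading data, then promoting this to absolute simplicity using central simplicity of $\LL$. The grading torus $\GS\cong\Gm$ of Definition~\ref{def:gradingtorus} sits in $\Aut(\LL)$ and is non-trivial since $\LL\neq\LL_0$. Algebraicity makes $e_\sigma(x,s)\in\Aut(\LL)(k)$ well-defined for each $(x,s)\in\LL_\sigma\oplus\LL_{2\sigma}$, and functorially these assemble (via commutator relations read off from the nilpotency of $\ad$ on $\LL_{\pm 1},\LL_{\pm 2}$) into a closed connected unipotent subgroup $\U^+\subseteq\Aut(\LL)$, $k$-isomorphic as a variety to $\LL_\sigma\oplus\LL_{2\sigma}$. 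Symmetrically $\U^-$ comes from the negative side, and $\GS$ normalizes both with weights $\{1,2\}$ and $\{-1,-2\}$ respectively. Let $\GG'\subseteq\Aut(\LL)^\circ$ denote the connected algebraic subgroup generated by $\GS,\U^+,\U^-$.

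The subgroups $\GS\cdot\U^\pm$ are opposite parabolic-type subgroups of $\GG'$ with common Levi $\Cent_{\GG'}(\GS)$, forcing $\GG'$ to be reductive with $\GS$ split of rank $\geq 1$, which gives (b). For absolute simplicity, pass to $\bar k$ and suppose $\mathbf N\trianglelefteq\GG'_{\bar k}$ is a proper connected normal subgroup. Let $I_i\subseteq(\LL\otimes\bar k)_i$ be the weight-$i$ part of $\Lie(\mathbf N)$ for $i\neq 0$; the ideal property of $\Lie(\mathbf N)$ in $\Lie(\GG')_{\bar k}$ translates into the conditions~\eqref{eq:Iij}--\eqref{eq:Iiii-2}, so Lemma~\ref{lem:Gseparable}(\ref{gsep:i}) yields a dichotomy. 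Either $I_i=0$ for all $i\neq 0$, so that $\Lie(\mathbf N)$ commutes with each $\LL_i$ and $\mathbf N$ acts trivially on $\Lie(\GG')\cong\newl{\LL}$ — whence $\mathbf N$ is trivial by the infinitesimal form of Lemma~\ref{lem:Gseparable}(\ref{gsep:ii}) — or $I_i=(\LL\otimes\bar k)_i$ for all $i\neq 0$, in which case $\mathbf N=\GG'_{\bar k}$. Adjointness of $\GG'$ then follows from the same lemma: any element of $Z(\GG')(\bar k)$ acts trivially on each $\LL_i$ for $i\neq 0$, hence is the identity.

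To obtain (d), decompose $\Lie(\GG')$ into weight spaces under $\GS$. The weight-$i$ piece for $i\neq 0$ is $\LL_i$, realized as the Lie algebra of $\U^\pm$. The weight-zero piece $\Lie(\Cent_{\GG'}(\GS))$ consists of $\GS$-equivariant (hence grading-preserving) derivations of $\LL$, and restricting to the non-zero weight parts yields a map to $\newl{\LL}_0$: axioms~\eqref{eq:newL} follow from the Lie derivation identity, and injectivity is the infinitesimal form of Lemma~\ref{lem:Gseparable}(\ref{gsep:ii}). Surjectivity is the main obstacle: every $\phi\in\newl{\LL}_0$ must be exhibited as the differential of a genuine automorphism of $\LL$, and this is where algebraicity enters essentially — one uses $\LL_0=\sum_{i\neq 0}[\LL_i,\LL_{-i}]$ (from central simplicity) and writes $\phi$ as a combination of commutators of $\Lie(\U^+)$ with $\Lie(\U^-)$ together with the grading derivation. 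Once (d) is in hand, $\GG'$ and $\Aut(\LL)^\circ$ are connected algebraic subgroups of $\Aut(\LL)$ with the same Lie algebra, so they coincide. Finally, (c) follows from the bracket formulas~\eqref{eq:[]} on $\newl{\LL}$ together with perfectness of $\LL$: $[\newl{\LL},\newl{\LL}]_i=\LL_i$ for $i\neq 0$, while $[\newl{\LL},\newl{\LL}]_0$ equals the image of $\LL_0\hookrightarrow\newl{\LL}_0$ by the second line of~\eqref{eq:[]} combined with $\LL_0=\sum_{i\neq 0}[\LL_i,\LL_{-i}]$, giving $[\Lie(\GG),\Lie(\GG)]=\LL$.
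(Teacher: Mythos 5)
First, note that this paper does not actually prove the statement: it is imported verbatim by citation from~\cite[Theorem 4.1.8]{BdMS}, so the only comparison available is with that proof. Your outline follows the same general strategy as the cited source (the grading torus $\GS$ of Definition~\ref{def:gradingtorus}, the exponentials $e_\sigma(x,s)$ supplied by algebraicity generating unipotent subgroups $\U^\pm$, and Lemma~\ref{lem:Gseparable} to control normal subgroups and centralizers), but several steps you assert in passing are exactly the characteristic-$p$ difficulties that the actual proof has to overcome, and as written they are gaps.

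Concretely: (a) ``$\GS\cdot\U^\pm$ are opposite parabolic-type subgroups, forcing $\GG'$ to be reductive'' is not an argument; a connected group generated by a torus and two opposite unipotent subgroups is not automatically reductive, and you must actually kill the unipotent radical (e.g.\ by a normal-subgroup argument of the same kind you use later). (b) In your simplicity dichotomy, the horn $I_i=\LL(\bar k)_i$ for all $i\neq 0$ only gives an inclusion of Lie algebras $\bigoplus_{i\neq 0}\LL(\bar k)_i\subseteq\Lie(\mathbf N)$; concluding $\mathbf N=\GG'_{\bar k}$ from tangent-space data is precisely the kind of inference that fails in characteristic $p$, so a genuine group-level argument (for instance $\U^\pm\subseteq\mathbf N$) is needed and is missing. (c) The closing step ``same Lie algebra, so $\GG'=\Aut(\LL)^\circ$'' presupposes that $\Lie(\Aut(\LL))=\Der_k(\LL)$ is no larger than $\newl{\LL}$, i.e.\ that every derivation of $\LL$ is concentrated in degrees $-2,\dots,2$, that its nonzero-degree components are inner, and that its degree-zero component lies in $\newl{\LL}_0$, together with smoothness of $\Aut(\LL)$ at the identity; you only discuss $\Lie(\GG')$ and never bound $\Der(\LL)$, so the conclusion $\Lie(\GG)\cong\newl{\LL}$ is not reached. (d) Your surjectivity sketch is wrong in substance: sums of brackets of $\Lie(\U^+)$ with $\Lie(\U^-)$ together with the grading derivation only produce $\ad(\LL_0)+k\zeta$, which is in general a \emph{proper} subspace of $\newl{\LL}_0$ --- the whole point of Construction~\ref{constr:newL}, and of the pair of conclusions $\LL=[\Lie(\GG),\Lie(\GG)]$ and $\Lie(\GG)\cong\newl{\LL}$, is that $\newl{\LL}_0$ may strictly contain the image of $\LL_0$ plus $k\zeta$. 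What is really needed there is the extension argument: every $\phi\in\newl{\LL}_0$, by the identities \eqref{eq:newL} together with $\LL_0=\sum_{i\neq 0}[\LL_i,\LL_{-i}]$ and centrality, extends to an honest derivation of $\LL$, and conversely; this, not ``writing $\phi$ as a combination of commutators,'' is how the degree-zero part is identified. So the architecture is right, but points (a)--(d) are genuine gaps rather than routine verifications.
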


\begin{lemma}\cite[Lemma 4.2.4 (i)]{BdMS}\label{lem:algebraic}
Let $k$ be a field, $\Char k\neq 2,3$. Let $\GG$ be an adjoint simple algebraic group over $k$.
Let $\LL=\Lie(\GG)$ be its Lie algebra.
Let $\LL=\bigoplus\limits_{i=-2}^2\LL_i$ be any $5$\dash grading on $\LL$
such that $\LL_1\oplus\LL_{-1}\neq 0$.
The $5$-graded Lie algebra $\LL$ is algebraic.
\end{lemma}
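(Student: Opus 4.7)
The plan is to realize each $e_\si(x,s)$ as $\Ad(u)$ for a suitable $u\in\GG(k)$, obtained by exponentiating $x+s$ inside the unipotent radical of a parabolic subgroup of $\GG$ canonically attached to the 5-grading. First, I would turn the 5-grading into a cocharacter of $\GG$: the grading torus $\GS\subseteq\Aut(\LL)$ of Definition~\ref{def:gradingtorus} is a 1-parameter subgroup acting on $\LL_i$ by $t\mapsto t^i$, and under the hypotheses that $\GG$ is adjoint simple and $\Char k\neq 2,3$ the adjoint representation identifies $\GG$ with $\Aut(\LL)^\circ$, so $\GS$ pulls back to a cocharacter $\lambda\colon\Gm\to\GG$ with weights in $\{-2,-1,0,1,2\}$ on $\LL=\Lie(\GG)$.

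Next I would pass to the attracting parabolic $\GP=\GP(\lambda)\subseteq\GG$ and its unipotent radical $U=\radu(\GP)$. Standard parabolic theory yields $\Lie(\GP)=\LL_0\oplus\LL_1\oplus\LL_2$ and $\Lie(U)=\LL_1\oplus\LL_2$. The inclusions $[\LL_1,\LL_2]\subseteq\LL_3=0$ and $[\LL_2,\LL_2]\subseteq\LL_4=0$ show that $\Lie(U)$ is nilpotent of class at most $2$ with $\LL_2$ central; consequently $U$ is a 2-step nilpotent unipotent $k$-group. In characteristic $\neq 2$, the truncated Baker--Campbell--Hausdorff formula
\[
y_1*y_2=y_1+y_2+\tfrac 12[y_1,y_2]
\]
turns $\Lie(U)$ into a group canonically isomorphic to $U$, producing an isomorphism of $k$-varieties $\exp\colon\Lie(U)\to U$.

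Given $(x,s)\in\LL_1\oplus\LL_2$, I would set $y=x+s$ and $u=\exp(y)\in U(k)$. Every application of $\ad(y)$ raises the grading by at least $1$, so after five steps the image exits the support $\{-2,\ldots,2\}$ and $\ad(y)^5=0$; hence $\exp(\ad y)=\sum_{i=0}^4\ad(y)^i/i!$ is defined (using $\Char k\neq 2,3$ to invert $4!$) and coincides with $e_\si(x,s)$. The standard identity $\Ad(\exp y)=\exp(\ad y)$, valid whenever both sides make sense, then yields $e_\si(x,s)=\Ad(u)\in\Aut(\LL)(k)$. The case $(x,s)\in\LL_{-1}\oplus\LL_{-2}$ is symmetric, using the opposite parabolic $\GP(\lambda^{-1})$ whose unipotent radical has Lie algebra $\LL_{-1}\oplus\LL_{-2}$. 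The main obstacle is Step~1: identifying $\GS\subseteq\Aut(\LL)$ with a cocharacter of $\GG$ itself depends on $\Ad\colon\GG\to\Aut(\LL)^\circ$ being an isomorphism, which in turn uses the good-characteristic hypothesis $\Char k\neq 2,3$; in smaller characteristics it can fail. By contrast, the truncated BCH exponential on a 2-step nilpotent unipotent group is elementary, and the compatibility $\Ad\circ\exp=\exp\circ\ad$ holds formally as an equality of polynomial maps of degree at most $4$.
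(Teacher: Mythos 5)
Your overall strategy --- turn the $5$-grading into a cocharacter $\lambda$ of $\GG$ via $\GG\cong\Aut(\LL)^\circ$, pass to $P(\lambda)$ and its unipotent radical $U$ with $\Lie(U)=\LL_1\oplus\LL_2$, and realize $e_\sigma(x,s)$ as $\Ad(u)$ for some $u\in U(k)$ --- is the right one, and it is in the spirit of the argument in the reference [BdMS, Lemma 4.2.4] that the paper itself cites instead of giving a proof. Also, the step you flag as the main obstacle is not the problem: $\GG=\Aut(\LL)^\circ$ in characteristic $\neq 2,3$ is exactly [BdMS, Lemma 4.1.6], which this paper uses elsewhere, and $\Lie(U)=\LL_1\oplus\LL_2$ with $U$ of nilpotency class $\le 2$ is standard. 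The genuine gap lies in the two statements you assert without proof, which are precisely where the characteristic-$5$ difficulty sits. First, the truncated BCH product does make $\Lie(U)$ into an algebraic group when $2\in k^\times$, but the claim that this group is ``canonically isomorphic to $U$'' is a real theorem in characteristic $p$, not a formality: unipotent groups are not determined by their Lie algebras in characteristic $p$, and even when an isomorphism of algebraic groups $\Lie(U)\to U$ exists, an arbitrary one is useless --- e.g.\ $(x,y)\mapsto (x,y+x^5)$ is a non-linear automorphism of a two-dimensional vector group in characteristic $5$, so composing with it destroys any relation between $\exp$ and $\ad$. You need a specific isomorphism for which $\Ad(\exp y)=\sum_{i=0}^4\ad(y)^i/i!$, and producing it is the whole point.

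Second, the identity $\Ad\circ\exp=\exp\circ\ad$ does not ``hold formally as an equality of polynomial maps'' in characteristic $5$. The soft argument that a truncated exponential of a nilpotent derivation $D$ is an automorphism requires $D^m=0$ with $2(m-1)\le p-1$, i.e.\ $\ad(y)^3=0$ when $p=5$; here one only has $\ad(y)^5=0$, and $\ad(y)^4$ is typically nonzero (already $\ad(x)^4\colon\LL_{-2}\to\LL_2$ for $x\in\LL_1$ is nonzero in the structurable case). Concretely, checking multiplicativity of $\sum_{i=0}^4\ad(y)^i/i!$ by the Leibniz rule leaves the terms $\sum_{i+j\ge 5}\tfrac1{i!j!}[\ad(y)^ia,\ad(y)^jb]$, which have no reason to vanish for a general $5$-graded Lie algebra in characteristic $5$ --- that the operator is nonetheless an automorphism for $\LL=\Lie(\GG)$ is exactly the content of the lemma, so your appeal to formality assumes what is to be proven. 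To close the gap you would have to either (a) reduce to $k$ algebraically closed (harmless, since being an automorphism is insensitive to field extension), split $\GG$, choose a Chevalley $\ZZ[1/6]$-model of $(\GG,\lambda,P(\lambda),U)$, construct $\exp$ over that base (only the prime $2$ occurs in denominators because $U$ has class $2$), and deduce the identity in characteristic $5$ from the characteristic-$0$ identity by flatness/density; or (b) invoke a precise canonical-exponential theorem for unipotent radicals of class $<p$ and still verify its compatibility with the adjoint representation. As written, the proposal defers the only hard step to assertions, so it is not yet a proof.
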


\begin{theorem}\label{thm:Che-alg}
Let $\LL$ be a central simple $5$\dash graded Lie algebra over
a field $k$ of characteristic different from $2,3$, such that $\LL\neq\LL_0$. Then $\LL$ is of Chevalley type
if and only if $\LL$ is algebraic.
\end{theorem}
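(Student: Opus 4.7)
The plan is to split the biconditional into its two implications, each of which is essentially a direct invocation of one of the two results cited immediately before the theorem.

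For \emph{algebraic implies Chevalley type}, the cited Theorem 4.1.8 of~\cite{BdMS} gives the conclusion outright: $\GG=\Aut(\LL)^\circ$ is adjoint absolutely simple with $\LL=[\Lie(\GG),\Lie(\GG)]$, which by definition makes $\LL$ of Chevalley type. This direction requires no further work.

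For \emph{Chevalley type implies algebraic}, I would write $\LL=[\Lie(\GG),\Lie(\GG)]$ for an adjoint absolutely simple $k$-group $\GG$ and carry out three steps. Step~1 is to extend the given $5$-grading on $\LL$ to a $5$-grading on $\Lie(\GG)$: the grading torus $\GS\subseteq\Aut(\LL)$ of $\LL$ should lift to a cocharacter $\Gm\to\GG$ via the adjoint embedding $\GG\hookrightarrow\Aut(\LL)$, using the standard fact that $\Aut(\LL)^\circ=\GG$ for Chevalley-type $\LL$ in characteristic $\neq 2,3$. Conjugation by that cocharacter then produces a $5$-grading on $\Lie(\GG)$ whose restriction to $\LL$ recovers the given one, and which is nontrivial since $\LL\neq\LL_0$. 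Step~2 applies Lemma~4.2.4(i) to this extended grading on $\Lie(\GG)$ to deduce that $\Lie(\GG)$ is algebraic (after relabelling indices if necessary to ensure $\Lie(\GG)_1\oplus\Lie(\GG)_{-1}\neq 0$, as the lemma requires). Step~3 descends algebraicity from $\Lie(\GG)$ to $\LL$: for any $(x,s)\in\LL_\sigma\oplus\LL_{2\sigma}\subseteq\Lie(\GG)_\sigma\oplus\Lie(\GG)_{2\sigma}$, the endomorphism $e_\sigma(x,s)$ is by construction an automorphism of $\Lie(\GG)$; since $x+s\in\LL$ and $\LL$ is an ideal of $\Lie(\GG)$, the operator $\ad(x+s)$ preserves $\LL$, so $e_\sigma(x,s)$ preserves $\LL$ and therefore restricts to an automorphism of~$\LL$.

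The main obstacle I anticipate is Step~1 --- justifying that the grading torus of $\LL$ lifts into $\GG$, equivalently that $\Aut(\LL)^\circ=\GG$ for every Chevalley-type $\LL$ including in characteristic~$5$. In characteristic $\neq 2,3,5$ this is classical, but in characteristic~$5$ ruling out extra connected components or exotic automorphisms of $[\Lie(\GG),\Lie(\GG)]$ may require case analysis, possibly invoking the Block--Wilson--Premet--Strade classification to confirm that no unexpected Lie automorphisms of $\LL$ appear outside $\GG$. Once that point is secured, the rest of the argument is essentially formal manipulation with $e_\sigma$ and restriction to the ideal $\LL$.
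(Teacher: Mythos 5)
Your proposal is correct and takes essentially the same route as the paper's (very terse) proof: one direction is exactly \cite[Theorem 4.1.8]{BdMS}, and the other extends the $5$-grading to $\Lie(\GG)$ via the grading torus, applies \cite[Lemma 4.2.4(i)]{BdMS}, and transfers algebraicity back to $\LL=[\Lie(\GG),\Lie(\GG)]$ --- the paper doing this transfer by citing Lemma~\ref{lem:ext-alg}, you by restricting the automorphisms $e_\sigma(x,s)$ to the characteristic ideal $\LL$. The obstacle you flag in Step~1 is already covered by the quoted results of~\cite{BdMS} (e.g.\ the fact that $\GG=\Aut(\Lie(\GG))^\circ$ and $\Der([\LL,\LL])\cong\Lie(\GG)$), which are valid in every characteristic $\neq 2,3$ including $5$, so no appeal to the Block--Wilson--Premet--Strade classification is needed there.
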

\begin{proof}
By~\cite[Lemma 3.1.6]{BdMS} a 5-graded Lie algebra $\LL$ over $k$ is algebraic, if and only if $\LL\otimes_k\bar k$
is algebraic, where $\bar k$ is the algebraic closure of $k$. By definition,
a Lie algebra $\LL$ is of Chevalley type if and only if $\LL\otimes_k\bar k$ is of Chevalley type. Thus, we can assume from the
start that $k$ is algebraically closed. Over an algebraically closed field $k$, simple Lie algebras of Chevalley type
coincide with $[\Lie(G),\Lie(G)]$, where $G$ is an adjoint simple algebraic group over $k$ (cf.~\cite[Ch. III]{Sel67} and~\cite[Lemma 4.1.6]{BdMS}),
and all these Lie algebras are algebraic by Lemma~\ref{lem:algebraic}. The converse holds by Theorem~\ref{thm:BdMS-premet}.
\end{proof}

\section{5-Graded simple Lie algebras}\label{sec:5-gr}

In this section we establish the implication $\mathit{(2)\implies (1)}$ of Theorem~\ref{thm:1}. Namely, we prove the following
theorem.

\begin{theorem}\label{thm:Che-type}
Let $\LL$ be a central simple $5$\dash graded Lie algebra over
a field $k$ of characteristic different from $2,3$, such that $\LL\neq\LL_0$.
Then $\LL$ is of Chevalley type.
\end{theorem}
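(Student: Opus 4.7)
The plan is to combine Theorem~\ref{thm:main} (which handles the algebraically closed case) with the algebraicity criterion of Theorem~\ref{thm:Che-alg}, using base change and descent along $k\to\bar k$.

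When $\Char k\neq 5$, the statement is immediate: Remark~\ref{rem:algebraic} gives that $\LL$ is algebraic, and Theorem~\ref{thm:Che-alg} then forces $\LL$ to be of Chevalley type. This case already appears in the earlier work cited in the introduction.

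Assume $\Char k=5$. First I would pass to an algebraic closure $\bar k$ of $k$. The base change $\LL\otimes_k\bar k$ inherits the non\dash trivial $5$\dash grading $(\LL\otimes_k\bar k)_i=\LL_i\otimes_k\bar k$, and it remains central simple over $\bar k$ because central simplicity of Lie algebras is preserved under arbitrary field extensions (the centroid base\dash changes correctly). Theorem~\ref{thm:main} then shows that $\LL\otimes_k\bar k$ is a Chevalley Lie algebra over $\bar k$, and applying Theorem~\ref{thm:Che-alg} in the reverse direction over $\bar k$ yields that $\LL\otimes_k\bar k$ is algebraic.

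The remaining step is to descend algebraicity from $\bar k$ back to $k$. For any $(x,s)\in\LL_\sigma\oplus\LL_{2\sigma}$ the operator $e_\sigma(x,s)=\sum_{i=0}^4\frac{1}{i!}\ad(x+s)^i$ is a priori a $k$\dash linear endomorphism of $\LL$, and its extension of scalars to $\bar k$ coincides with the corresponding $e_\sigma(x,s)$ computed inside $\LL\otimes_k\bar k$. By the preceding paragraph the extension is an automorphism, hence bijective; but bijectivity of a linear map between finite\dash dimensional $k$\dash vector spaces is detected after the faithfully flat extension $k\to\bar k$, so $e_\sigma(x,s)$ is already bijective over $k$. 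It is automatically a Lie endomorphism (this is a formal identity valid as soon as $2,3\in k^\times$), and therefore lies in $\Aut_k(\LL)$. Consequently $\LL$ itself is algebraic, and Theorem~\ref{thm:Che-alg} applied over $k$ concludes that $\LL$ is of Chevalley type.

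The main obstacle has already been absorbed in Theorem~\ref{thm:main}, which eliminated the Cartan and Melikian types in characteristic~$5$ via the Block--Wilson--Premet--Strade classification; the present step is essentially packaging plus a routine descent of algebraicity. The only mildly delicate ingredient I would need to verify carefully is the transfer of central simplicity under $k\to\bar k$, but this is standard for Lie algebras in characteristic $\neq 2,3$.
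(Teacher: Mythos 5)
Your route is the paper's route: base change to $\bar k$, apply Theorem~\ref{thm:main} to get Chevalley type over $\bar k$, convert that to algebraicity via Theorem~\ref{thm:Che-alg}, descend algebraicity to $k$, and apply Theorem~\ref{thm:Che-alg} once more. (Your separate treatment of $\Char k\neq 5$ is harmless but unnecessary, since Theorem~\ref{thm:main} already covers that case.) The transfer of central simplicity along $k\to\bar k$ that you flag is indeed standard and is used without comment in the paper.

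The one step whose justification is wrong is your parenthetical claim that $e_\si(x,s)$ ``is automatically a Lie endomorphism (a formal identity valid as soon as $2,3\in k^\times$).'' That is false in characteristic $5$, and it is precisely the point at issue: if multiplicativity of the truncated exponential were formal whenever $2,3\in k^\times$, then every $5$\dash graded Lie algebra in characteristic $5$ would be algebraic and Definition~\ref{def:algebraic} would be vacuous; note that Remark~\ref{rem:algebraic} requires $2,3,5\in R^\times$ for exactly this reason. Concretely, verifying $e_\si(x,s)[a,b]=[e_\si(x,s)a,e_\si(x,s)b]$ by the Leibniz expansion of $\ad(x+s)^n[a,b]$ involves dividing by factorials of integers up to $8$, and the cross terms with $5\le i+j\le 8$ do not cancel formally when $\Char k=5$ because the binomial identity $\sum_{i+j=n}\binom{n}{i}[\ad(x+s)^ia,\ad(x+s)^jb]=0$ becomes vacuous modulo $5$. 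Fortunately your argument is repaired by the same mechanism you already use for bijectivity: $e_\si(x,s)\otimes\bar k$ is a Lie algebra automorphism of $\LL\otimes_k\bar k$, the bracket on $\LL$ is the restriction of the bracket on $\LL\otimes_k\bar k$, and hence multiplicativity of $e_\si(x,s)$ on $\LL$ follows by restriction along the embedding $\LL\hookrightarrow\LL\otimes_k\bar k$ --- this is exactly what the paper's phrase ``$\LL$ is algebraic, since $\LL$ embeds into $\LL\otimes_k\bar k$'' is invoking. With that substitution your proof coincides with the paper's.
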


The proof relies on the Block--Wilson--Premet--Strade classification of simple Lie algebras over
algebraically closed fields of characteristic $\neq 2,3$,
and in what follows we use the notation of~\cite{Strade-bookI} for the classes of central simple Lie algebras
that arise in that classification.

\begin{lemma}\label{lem:L-1}
Let $\LL\neq\LL_0$ be a simple $5$\dash graded Lie algebra over
a field $k$ of characteristic $\neq 2,3$, and let $\mathbf{S}$ be the corresponding grading torus. Let
$\LL=\bigoplus_{i=-r}^s\LL_{[i]}$ be another $\ZZ$-grading on $\LL$ such that $\bigoplus_{i=-r}^{-1}\LL_{[i]}$ is
generated as a Lie algebra by $\LL_{[-1]}\neq 0$. If
the second grading is preserved by $\mathbf{S}(k)$, then $\LL_{[-1]}\not\subseteq\LL_0$.
\end{lemma}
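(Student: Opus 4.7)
The plan is to argue by contradiction: suppose $\LL_{[-1]}\subseteq \LL_0$, and construct a proper nonzero ideal of $\LL$. First, because $\LL_0$ is a Lie subalgebra of $\LL$ and $\bigoplus_{i=-r}^{-1}\LL_{[i]}$ is generated by $\LL_{[-1]}$ as a Lie algebra, the assumption immediately yields $\bigoplus_{i=-r}^{-1}\LL_{[i]}\subseteq \LL_0$. The $\mathbf{S}(k)$-invariance of the second grading produces a joint decomposition $\LL=\bigoplus_{m,j}\LL_{[m],j}$, where $\LL_{[m],j}=\LL_{[m]}\cap \LL_j$; in particular $\LL_{[m],j}=0$ whenever $j\neq 0$ and $m\leq -1$, so every $a\in \LL_i$ with $i\neq 0$ decomposes as $a=\sum_{m'\geq 0}a_{m'}$ with $a_{m'}\in \LL_{[m'],i}$.

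The key step is to apply Lemma~\ref{lem:Gseparable}(i) to the subspaces $I_j^{(N)}=\bigoplus_{m\geq N}\LL_{[m],j}\subseteq \LL_j$ for $j\neq 0$ and $N\geq 1$. Conditions \eqref{eq:Iij}--\eqref{eq:Iiii-2} are verified by a direct bidegree count: bracketing any element of $\LL_i$ (which by the above has second-grading degree $\geq 0$) with an element of second-grading degree $\geq N$ produces something of second-grading degree $\geq N$. The lemma then gives, for each $N$, the dichotomy: either $I_j^{(N)}=0$ for all $j\neq 0$, or $I_j^{(N)}=\LL_j$ for all $j\neq 0$. Iterating the dichotomy over $N=1,2,\ldots$---which must terminate by finite-dimensionality of $\LL$---yields an integer $N_0\geq 0$ such that $\LL_j\subseteq \LL_{[N_0]}$ for every $j\neq 0$. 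Concretely, if $N^*$ is the smallest $N\geq 1$ with $I_j^{(N)}=0$ for all $j\neq 0$, then combining the ``trivial'' alternative at $N=N^*$ with the ``full'' alternative at $N=N^*-1$ (or with the vanishing from the first paragraph if $N^*=1$) pins every nonzero $\LL_j$ to the single component $\LL_{[N^*-1]}$, and we set $N_0=N^*-1$.

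With this single-degree concentration in hand, for every $j\neq 0$ we get $[\LL_{[-1]},\LL_j]\subseteq \LL_j\cap \LL_{[N_0-1]}\subseteq \LL_{[N_0]}\cap \LL_{[N_0-1]}=0$. Now define $J=\sum_{k\geq 0}\ad(\LL_0)^k(\LL_{[-1]})\subseteq \LL_0$; by construction $\ad(\LL_0)J\subseteq J$. A short induction on $k$ using the Jacobi identity and $[\LL_j,\LL_{[-1]}]=0$ shows $\ad(\LL_j)\ad(\LL_0)^k(\LL_{[-1]})=0$ for all $k\geq 0$ and $j\neq 0$, so $\ad(\LL)J\subseteq J$. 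Therefore $J$ is a Lie ideal of $\LL$: it is nonzero since it contains $\LL_{[-1]}\neq 0$, and proper since it is contained in $\LL_0\subsetneq \LL$. This contradicts the simplicity of $\LL$ and finishes the proof. The main obstacle in the plan is the iterated use of Lemma~\ref{lem:Gseparable}(i): its hypotheses must be rechecked at each $N$, and the successive dichotomies must be combined carefully to distill the uniform single-degree concentration of $\LL_j$; the concluding Jacobi/ideal argument is then short.
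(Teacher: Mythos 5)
Your route has a genuine gap at its foundation. Everything in your argument runs through the full bigraded decomposition: to verify \eqref{eq:Iij}--\eqref{eq:Iiii-2} for $I^{(N)}_j=\bigoplus_{m\geq N}(\LL_{[m]}\cap\LL_j)$, and to extract the single-degree concentration $\LL_j\subseteq\LL_{[N_0]}$, you need not just that elements of $\LL_j$ ($j\neq 0$) have no negative second-grading components, but that their second-grading components lie again in $\LL_j$. Invariance under the group of $k$-points $\mathbf{S}(k)\cong k^\times$ does not give this for every field covered by the statement: it amounts to separating the characters $t\mapsto t^j$, $-2\le j\le 2$, and over $k=\mathbb{F}_5$ every $t$ satisfies $t^4=1$, so $t^2=t^{-2}$ and an $\mathbf{S}(\mathbb{F}_5)$-invariant subspace can contain a vector $v_2+v_{-2}$ without containing $v_{\pm 2}$ (the line it spans is invariant). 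Since the lemma is stated over an arbitrary field of characteristic $\neq 2,3$ and characteristic $5$ is the whole point of the paper, this is not a cosmetic omission: your ``direct bidegree count'' is exactly where the unproved compatibility is used. (The paper's own first sentence asserts the same decomposition, but its argument never needs it; it only uses that for $x\in\LL_j$, $j\neq 0$, the components of $x$ in $\LL_{[i]}$ with $i<0$ lie in $\LL_0$, hence are fixed by every $t\in\mathbf{S}(k)$, while $x$ itself is scaled by $t^j\neq 1$ for some $t\neq 0,\pm 1$ --- such $t$ exists in any field of characteristic $\neq 2,3$.) A secondary mismatch: Lemma~\ref{lem:Gseparable} is stated for \emph{central} simple $\LL$, whereas Lemma~\ref{lem:L-1} assumes only simplicity; with $K=k$ this can presumably be repaired (e.g.\ by passing to the centroid and checking both gradings are linear over it), but you would have to address it.

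Beyond the gap, your plan is also much heavier than necessary, and this is where it diverges from the paper. Once one knows $\LL_j\subseteq\bigoplus_{i\ge 0}\LL_{[i]}$ for all $j\neq 0$ (the eigenvalue argument above), the paper finishes in one line: since $\LL$ is simple and $\LL\neq\LL_0$, it is generated by the subspaces $\LL_j$, $j\neq 0$, all contained in the subalgebra $\bigoplus_{i\ge 0}\LL_{[i]}$, so $\LL=\bigoplus_{i\ge 0}\LL_{[i]}$, contradicting $\LL_{[-1]}\neq 0$. No appeal to Lemma~\ref{lem:Gseparable}, no concentration in a single degree, and no construction of the ideal $J$ are needed; your closing argument (the induction showing $[\LL_j,\ad(\LL_0)^k(\LL_{[-1]})]=0$ and that $J$ is a proper nonzero ideal) is fine in itself, but the inputs it requires are precisely the steps that are not justified over all fields in the statement.
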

\begin{proof}
Since $\mathbf{S}$ preserves the second grading, we have
$$
\LL_{[i]}=\bigoplus_{j=-2}^2 (\LL_{[i]}\cap\LL_j)
$$
for all $-r\le i\le s$.
Assume  that $\LL_{[-1]}\subseteq\LL_{0}$. Then
$\LL_{[-i]}\subseteq\LL_0$ for all $1\le i\le r$.
For any $x\in\LL_j$, $-2\le j\le 2$, we have $t\cdot x=t^j x$ for any $t\in\mathbf{S}(k)$.
Since $\Char(k)\neq 2,3$, there is $t\in k$ such that $t^{\pm 1}\neq 1$ and $t^{\pm 2}\neq 1$.
Then $\LL_j\subseteq\bigoplus_{i\ge 0}\LL_{[i]}$ for all $j\neq 0$.
Since $\LL$ is simple, it is generated by $\LL_{j}$, $j\neq 0$, hence $\LL=\bigoplus_{i\ge 0}\LL_{[i]}$.
However, this contradicts $\LL_{[-1]}\neq 0$.
\end{proof}

\begin{lemma}\label{lem:graded-cartan}
Let $\LL$ be one of the simple graded Cartan type Lie algebras $X(m,\underline{n})^{(2)}$, $X\in\{W,S,H,K\}$,
in the notation of~\cite{Strade-bookI}, over an algebraically closed field
$k$ of characteristic $5$. Then $\LL$ does not have a non-trivial $5$-grading.
\end{lemma}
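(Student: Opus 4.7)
The plan is by contradiction: suppose $\LL = X(m,\underline{n})^{(2)}$ carries a non-trivial $5$-grading $\LL=\bigoplus_{i=-2}^{2}\LL_i$, and let $\mathbf{S}\cong\Gm\subseteq \Aut(\LL)$ be the corresponding grading torus.

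The first step is to bring $\mathbf{S}$ into standard position with respect to the canonical Cartan-type structure. For each of the types $X\in\{W,S,H,K\}$, the standard filtration of a simple graded Cartan-type Lie algebra is invariant under every automorphism of $\LL$ (Kreknin--Kostrikin, Kuznetsov; cf.\ Strade, Vol.~I, Ch.~7). Hence $\mathbf{S}$ preserves the standard filtration of $\LL$. Since $\mathbf{S}$ is linearly reductive, I can choose an $\mathbf{S}$-equivariant splitting of the filtration, and thereby reduce to the case in which $\mathbf{S}$ preserves the standard grading $\LL=\bigoplus_{d} \LL_{[d]}$.

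The second step is to apply Lemma~\ref{lem:L-1}. For each of $X\in\{W,S,H,K\}$, the negative part $\bigoplus_{d<0}\LL_{[d]}$ of the standard grading is generated as a Lie algebra by its lowest-degree piece $\LL_{[-r]}$ (with $r=1$ for $X\in\{W,S,H\}$ and $r=2$ for $X=K$), so the hypothesis of Lemma~\ref{lem:L-1} is satisfied. The conclusion is that $\LL_{[-r]}\not\subseteq\LL_0$, so there exists $x\in \LL_{[-r]}$ with a nonzero $\mathbf{S}$-weight $j\in\{\pm 1,\pm 2\}$.

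The final step is to propagate the weight $j$ through the standard grading and reach a weight of absolute value $>2$, contradicting the $5$-grading hypothesis. Over a field of characteristic $5$, the highest positive standard-grading degree $N$ of $X(m,\underline{n})^{(2)}$ is always at least $3$, and each component $\LL_{[d]}$ with $d\ge 1$ is obtained as iterated brackets of the generating low-degree components $\LL_{[-r]},\dots,\LL_{[1]}$. Tracking the $\mathbf{S}$-weights along these bracket relations, starting from the weight $j\neq 0$ of $x$, yields an element whose $\mathbf{S}$-weight exceeds $2$ in absolute value, giving the required contradiction. The main obstacle I expect is carrying out this weight-propagation uniformly: in practice it requires an explicit, case-by-case inspection of the bracket formulas of $W$, $S$, $H$, $K$ in terms of their divided-power realisations, which I expect to be mechanical once the correct generators are chosen in each type.
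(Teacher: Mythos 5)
Your first two steps run roughly parallel to the paper's proof (the paper puts the grading torus in standard position via \cite[Theorem 7.4.1]{Strade-bookI} and then applies Lemma~\ref{lem:L-1}), though note a slip: Lemma~\ref{lem:L-1} has as hypothesis that the negative part is generated by $\LL_{[-1]}$ and as conclusion that $\LL_{[-1]}\not\subseteq\LL_0$. For $X=K$ the negative part is generated by $\LL_{[-1]}$, not by the one-dimensional $\LL_{[-2]}$, so you get a non-zero weight vector in $\LL_{[-1]}$, not in $\LL_{[-r]}$.

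The genuine gap is your final step. The idea that one can ``propagate the weight $j$ through the standard grading'' to reach an $\mathbf{S}$-weight of absolute value $>2$ is fallacious as a general principle: a bracket whose weight would leave $\{-2,\dots,2\}$ can simply vanish, and this is exactly what happens in Chevalley-type Lie algebras, which do admit non-trivial $5$-gradings. So no purely formal bookkeeping of weights along the bracket relations of $W,S,H,K$ can produce the contradiction; a structural input specific to Cartan type is required, and it is precisely this input that your sketch defers to a ``mechanical case-by-case inspection.'' The paper's actual argument is quantitative: after reducing to the restricted subalgebra $X(m,\underline{1})^{(2)}$ (so that $\ad(\LL_{[-2]})^5=0$), it notes that for a $5$-grading-homogeneous $x\in\LL_{[-1]}$ of non-zero degree, $\ad(x)^3(\LL)$ lies in the nilpotent subalgebra $\LL_{\pm1}\oplus\LL_{\pm2}$; by Wilson's theorem $\LL_{[-1]}$ is irreducible under grading-preserving automorphisms, hence has a basis of elements $y$ with this property; a Jacobi-identity estimate then gives
$\ad(\LL_{[-1]})^{2\dim(\LL_{[-1]})+1+8\dim(\LL_{[-2]})}(\LL)\subseteq\ad(y)^3(\LL)$,
while $\LL_{[0]}\subseteq\ad(\LL_{[-1]})^{s}(\LL_{[s]})$ is not nilpotent, forcing $s<2\dim(\LL_{[-1]})+1+8\dim(\LL_{[-2]})$; this fails for the actual depths $s\approx 4m$ in each of the four types. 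None of this (restrictedness, Wilson's irreducibility, the nilpotency of $\ad(x)^3(\LL)$, the non-nilpotency of $\LL_{[0]}$) appears in your proposal, so as written it does not constitute a proof.
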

\begin{proof}
Let $\LL=\bigoplus_{i=-2}^2\LL_i$ be a non-trivial $5$-grading on $\LL$, and let $\LL=\bigoplus_{i=-r}^s\LL_{[i]}$
be the standard grading on $\LL$. Note that $r=1$ for $X=W,S,H$, and $r=2$, $\dim(\LL_{[-2]})=1$ for $X=K$.

By~\cite[Theorem 7.4.1]{Strade-bookI} we can assume that the grading torus $\mathbf{S}$ of the $5$-grading	 preserves the
standard grading.
Then by Lemma~\ref{lem:L-1} the $5$-grading on $\LL_{[-1]}$ is non-trivial.
Since $\LL_{[-1]}$
is contained in the restricted subalgebra $X(m,\underline{1})^{(2)}$ of $\LL$,
the induced $5$-grading on the restricted subalgebra is non-trivial, and hence we can assume
that $\LL=X(m,\underline{1})^{(2)}$ from the start.

We have $\LL_{[-1]}=\ad(\LL_{[-1]})^{s+1}(\LL_{[s]})$ (see e.g.~\cite[Ch. 4, proofs of Theorems 2.4, 3.5, 4.5, 5.5]{SF}).
We claim that $\LL_{[-1]}$ contains an element $y\neq 0$ of non-zero 5-grading such that
$$
\ad(y)^4\ad(\LL_{[-1]})^{s-3}(\LL_{[s]})=0.
$$
Since any $y\in\LL_{-2}\oplus\LL_2$ satisfies $\ad(y)^4=0$, we only need to consider the case where
$\LL_{[-1]}\subseteq\LL_{-1}\oplus\LL_0\oplus\LL_1$. Since the 5-grading on $\LL_{[-1]}$ is non-trivial,
there is a non-zero element $y\in\LL_{[-1]}\cap(\LL_{-1}\oplus\LL_{1})$.
Then $\ad(y)^4(\LL)\subseteq\LL_{\pm 2}$, hence
$\ad(y)^4\ad(\LL_{[-1]})^{s-3}(\LL_{[s]})\subseteq \LL_{[-1]}\cap \LL_{\pm 2}=0$, as required.

Assume without loss of generality that $y\in\LL_1$ or $y\in\LL_2$.
By~\cite[Theorem 2]{Wilson-auto} the space $\LL_{[-1]}$
is an irreducible representation for the group of Lie algebra automorphisms of $\LL$ preserving the standard grading.
Hence $\LL_{[-1]}$ has a basis consisting of elements $x_1=y$, $x_2,\ldots,x_{\dim(\LL_{[-1]})}$ such that
$\ad(x_i)^4\ad(\LL_{[-1]})^{s-3}(\LL_{[s]})=0$ for all i.
Any $z\in\LL_{[-1]}$ can be written as $z=\sum_{i=1}^{\dim(\LL_{[-1]})}\lambda _ix_i$. Any
endomorphism $\prod_{k=1}^{s+1}\ad(z_k)\in\ad(\LL_{[-1]})^{s+1}$, $z_k\in\LL_{[-1]}$,
is a linear combination of products of basic derivations $\ad(x_i)$ of length $s+1$.
Assume first that $X=W,S,H$, so that $r=1$, and $\LL_{[-1]}$ is abelian.
Then we can reorder the terms of such a product of basic derivations so that all occurences of the same basic
derivation are brought together. Since
$\ad(x_i)^4\ad(\LL_{[-1]})^{s-3}(\LL_{[s]})=0$, we conclude that
every non-zero such product contains $\le 3$ occurences of each $\ad(x_i)$. In
all three cases $X=W,S,H$ we have $\dim(\LL_{[-1]})=m$.
If $X=W$, then $m\ge 1$ and $s=4m-1$. If $X=S$, then $m\ge 2$ and $s=4m-2$.
If $X=H$, then $s=4m-3$ and $m\ge 2$. Therefore, $s+1\ge 3m$, hence the only possible
non-zero product of length $s+1$ is the one that contains exactly 3 occurences of each $\ad(x_i)$.
Consequently, $\LL_{[-1]}\subseteq\ad(y)^3(\LL)\subseteq \LL_1\oplus\LL_2$. But then the total 5-grading
of a product of $s+1$ derivations $\ad(x)$, $x\in\LL_{[-1]}$, is at least $s+1$. In all cases $s+1\ge 4$.
If $s+1>4$, then this implies
$\ad(\LL_{[-1]})^{s+1}=0$, a contradiction. If $s+1=4$, then $\ad(\LL_{[-1]})^{s+1}(\LL_{[s]})\subseteq\LL_2$,
so that all $x\in\LL_{[-1]}$ have degree $2$, and then again $\ad(\LL_{[-1]})^{s+1}(\LL_{[s]})=0$, a contradiction.

It remains to consider the case $X=K$, where $\LL_{[-1]}$ is not
abelian, $r=2$, $\dim(\LL_{[-2]})=1$. In this case we have
$m\ge 3$, $\dim(\LL_{[-1]})=m-1$, and $s=4m$, if $m+3\equiv 0\pmod{5}$, and $s=4m+1$ otherwise.
As before, consider a product of basic derivations $\ad(x_i)$ of length $s+1$.
We can still reorder all basic derivations
 in any way we want, however, in the process
we add new products where
some pairs $\ad(x_j)\cdot\ad(x_i)$ will be replaced by $\ad([x_i,x_j])\in\ad(\LL_{[-2]})$. Since $\LL_{[-2]}$
is 1-dimensional, we may obtain $\le 4$ such new terms $\ad([x_i,x_j])$,
and after that the new products will always be 0. These additional products thus contain $\ge s+1-2\cdot 4=s-7$
basic derivations $\ad(x_i)$. If $m+3\equiv 0\pmod{5}$, then $m\ge 7$, and $s-7=4m-7=3(m-1)+m-4>3(m-1)$, so
such a product of $\ge s-7$ derivaions is necessarily zero. If $m+3\not\equiv 0\pmod{5}$, then $s-7=4m-6=3(m-1)+m-3$
may still be equal $3(m-1)$, if $m=3$. Then we conclude as in the previous case that $\LL_{[-1]}\subseteq
\LL_1\oplus\LL_2$ (and automatically $\LL_{[-2]}\subseteq\LL_2$). Hence the total degree of a product
of basic derivations is $\ge s-7\ge 3(m-1)=6$, hence
$\ad(\LL_{[-1]})^{s+1}(\LL_{[s]})=0$, a contradiction.
\end{proof}

\begin{theorem}\label{thm:main}
Let $\LL$ be a central simple $5$\dash graded Lie algebra over
an algebraically closed field $k$ of characteristic different from $2,3$, such that $\LL\neq\LL_0$.
Then $\LL$ is a simple Lie algebra of Chevalley type.
\end{theorem}
\begin{proof}
If $\Char k\neq 5$, then $\LL$ is a Chevalley Lie algebra by Remark~\ref{rem:algebraic} combined with Theorem~\ref{thm:Che-alg}.
Assume $\Char k=5$.
According to the Block--Wilson--Premet--Strade classification theorem~\cite{Strade-bookI},
it is enough to check that $\LL$ is not of Cartan or Melikian type.

Assume first that $\LL$ is a simple Lie algebra of Cartan type~\cite[Definition 4.2.4]{Strade-bookI}.
Let $\LL=\LL_{(-r)}\supseteq\ldots\supseteq\LL_{(s)}$
be a standard filtration of $\LL$.
By~\cite[Theorem 4.2.7 (3)]{Strade-bookI} the standard filtration is invariant under
all automorphisms of $\LL$. In particular, it is invariant under
the grading torus $\mathbf{S}$ of the $5$-grading.
Let
$$
Gr(\LL)=\bigoplus_{i=-r}^sGr(\LL)_{[i]}
$$
be the associated graded Lie algebra.  Hence $Gr(\LL)$ carries an induced $5$-grading. The induced $5$-grading is
non-trivial, since there is $0\neq x\in\LL_i$, $i\neq 0$, and $t\in\mathbb{F}_5^\times\subseteq\mathbf{S}(k)$ such that
$t\cdot x=t^ix\neq x$, and hence $t$ acts non-trivially on the image of $x$ in $Gr(\LL)$.

The derived series of $Gr(\LL)$ also inherits the $5$-grading,
hence $Gr(\LL)^{(\infty)}$ is $5$-graded.
We show that the induced $5$-grading on $Gr(\LL)^{(\infty)}$ is also non-trivial.
Indeed, if it were trivial, then $Gr(\LL)^{(\infty)}\subseteq Gr(\LL)_0$.
Since $Gr(\LL)$ is also a Lie algebra of Cartan type in the sense
of~\cite[Definition 4.2.4]{Strade-bookI}, $Gr(\LL)^{(\infty)}$ is simple by~\cite[Theorem 4.2.7 (1)]{Strade-bookI}. Then
we have
$$
Gr(\LL)^{(\infty)}\subseteq (Gr(\LL)_0)^{(\infty)}\subseteq Gr(\LL)^{(\infty)},
$$
which implies $Gr(\LL)^{(\infty)}=(Gr(\LL)_0)^{(\infty)}$.
Then by~\cite[Lemma 4.2.5]{Strade-bookI} we have $Gr(\LL)=Gr(\LL)_0$, which contradicts the non-triviality
of the $5$-grading on $Gr(\LL)$.

By~\cite[Theorem 4.2.7 (2)]{Strade-bookI}
we have $Gr(\LL)^{(\infty)}=X(m,\underline{n})^{(2)}$, $X=W,S,H,K$.
By Lemma~\ref{lem:graded-cartan} these algebras do not have non-trivial $5$-gradings.

Now let $\LL=M(2,n_1,n_2)$ be a simple Lie algebra of Melikian type. By~\cite[Theorem 1.2]{BKMcG}
we can assume that the grading torus corresponding to the $5$-grading under consideration preserves the standard
granding $\LL=\bigoplus_{i=-3}^s\LL_{[i]}$ of $\LL$. Then the grading derivation $\zeta$ corresponding to the $5$-grading
is a homogenous derivation of $\LL$. Let $W(2,n_1,n_2)$ be the standard simple Witt subalgebra of $\LL$. Then
by~\cite[Theorem 7.1.4]{Strade-bookI} any homogeneous derivation of $\LL$ acts non-trivially on $W(2,n_1,n_2)$, and hence
this subalgebra carries a non-trivial $5$-grading induced by $\zeta$. However, this is not possible by Lemma~\ref{lem:graded-cartan}.
\end{proof}

\begin{proof}[Proof of Theorem~\ref{thm:Che-type}]
Let $\bar k$ be the algebraic closure of $k$. Then $\LL\otimes_k\bar k$ is a central simple Lie algebra over $\bar k$
with a non-trivial $5$-grading. Then by Theorem~\ref{thm:main}
$\LL\otimes_k\bar k$ is a Lie algebra of Chevalley type. Then it is algebraic by Theorem~\ref{thm:Che-alg}.
Hence $\LL$ is algebraic, since $\LL$ embeds into $\LL\otimes_k\bar k$. Again by Theorem~\ref{thm:Che-alg}
we conclude that $\LL$ is of Chevalley type.
\end{proof}

\section{Simple structurable algebras and Kantor pairs}\label{sec:ka-str}

\begin{definition}
        A {\em structurable algebra} over a field $k$ of characteristic not~$2$ or $3$ is a finite-dimensional, unital $k$\dash algebra with involution
        $(\A,\bar{\ })$ such that
        \begin{equation}\label{struct id}
                [V_{x,y}, V_{z,w}] = V_{\{x,y,z\},w} - V_{z,\{y,x,w\}}
        \end{equation}
        for $x,y,z,w \in \A$, where the left hand side denotes the Lie bracket of the two operators, and where
        \[ V_{x,y}z := \{x \ y \ z\} := (x\overline{y})z + (z\overline{y})x - (z\overline{x})y . \]

        For all $x,y,z \in \A$, we write $U_{x,y}z:=V_{x,z}y$ and $ U_xy:=U_{x,x}y$.
        The trilinear map $(x,y,z) \mapsto \{ x \ y \ z \}$ is called the {\em triple product} of the structurable algebra.
\end{definition}

In \cite{A78} and \cite{A79}, a structurable algebra is defined as an algebra with involution such that
\begin{align}\label{eqdef}
        [T_z,V_{x,y}]=&V_{T_z x,y}-V_{x,T_{\overline{z}}y}
\end{align}
for all $x,y,z \in \A$ with $T_x:=V_{x,1}$.
The equivalence of \eqref{struct id} and \eqref{eqdef} follows from \cite[Corollary 5.(v)]{A79}.

\begin{definition}
        Let $(\A,\bar{\ })$ be a structurable algebra; then $\A=\mathcal{H}\oplus\Ss$ for
        \[ \mathcal{H}=\{h\in \mathcal{A}\mid \overline{h}=h\} \quad \text{and} \quad \Ss=\{s\in \mathcal{A}\mid \overline{s}=-s\}. \]
        The elements of $\mathcal{H}$ are called {\em hermitian elements},
        the elements of $\Ss$ are called {\em skew-hermitian elements} or briefly {\em skew elements}.
\end{definition}

As usual, the commutator and the associator are defined as
\[ [x,y]=xy-yx,\quad [x,y,z]=(xy)z-x(yz),\]
for all $x,y,z\in \A$.
For each $s \in \Ss$, we define the operator $L_s \colon \A\to\A$ by
\[ L_s x := sx . \]
The following map is of crucial importance in the study of structurable algebras:
\[\psi \colon \A\times \A\to\Ss \colon (x,y)\mapsto x\overline{y}-y\overline{x}.\]

\begin{definition}
        An {\em ideal} of $\A$ is a two-sided ideal stabilized by $\barop$.
        A structurable algebra $(\A,\barop)$ is {\em simple} if its only ideals are $\{0\}$ and $\A$,
        and it is called {\em central} if its center
        \begin{multline*}
            Z(\A,\barop)=Z(\A)\cap \mathcal{H}
                =\{c\in\A\mid [c,\A]=[c,\A,\A]=[\A,c,\A]=[\A,\A,c]=0\}\cap \mathcal{H}
        \end{multline*}
        is equal to $k1$.
\end{definition}

Recall the generalization of the
Tits--Kantor--Koecher construction that associates to any structurable algebra $\A$  a $5$-graded Lie algebra
$K(\A)$.

Let $\End(\A)$ be the ring of $k$\dash linear maps from $\A$ to $\A$.
For each $A\in \End(\A)$, we define new $k$\dash linear maps
\begin{align*}
A^\eps&=A-L_{A(1)+\overline{A(1)}},\\
A^\delta&=A+R_{\overline{A(1)}},
\end{align*}
where $L_x$ and $R_x$ denote left and right multiplication by an element $x \in \A$, respectively.

\begin{construction}\label{def:Lie alg}\cite[\S 3]{A79}
Let $(\A,\bar{\ })$ be a structurable algebra over a field $k$ of characteristic not~$2$ or $3$.
Consider two copies $\A_+$ and $\A_-$ of $\A$ with corresponding isomorphisms $\A \to \A_+ \colon x \mapsto x_+$
and $\A \to \A_- \colon x \mapsto x_-$, and let $\Ss_+\subseteq A_+$ and $\Ss_-\subseteq A_-$ be the corresponding subspaces of skew elements.
Let $\Instrl(\A)$ be the $k$-subspace of $\End(\A)$ spanned by all
$V_{x,y}$, $x,y\in\A$.
The 5-graded Lie algebra $K(\A)$ associated to $\A$ is the vector space
\[ K(\A)=\Ss_-\oplus \A_-\oplus \Instrl(\A) \oplus \A_+ \oplus \Ss_+ \]
with the 5-graded components
\begin{multline*}
	K(\A)_{-2}=\Ss_-,\quad K(\A)_{-1}=\A_-,\quad K(\A)_{0}=\Instrl(\A),\quad
		K(\A)_{1}=\A_+,\quad K(\A)_{2}=\Ss_+,
\end{multline*}
and the following Lie bracket.
\begin{alignat*}{2}
\intertext{$\bt\ [\Instrl,K(\A)]$}
[V_{a,b},V_{a',b'}]&:=V_{a,b}V_{a',b'}-V_{a',b'}V_{a,b}=V_{\{a,b,a'\},b'}-V_{a',\{b,a,b'\}} &\in \Instrl(\A)&\\
[V_{a,b},x_+] &:= (V_{a,b}x)_+ \in \A_+  & [V_{a,b},y_-] &:= (V_{a,b}^\eps y)_- =(-V_{b,a}y)_-\in \A_-\\
[V_{a,b},s_+] &:= (V_{a,b}^\delta s)_+\ = -\psi(a,sb)_+\in \Ss_+ & [V_{a,b},t_-] &:= (V_{a,b}^{\eps\delta} t)_- \ =\psi(b,ta)_-\in \Ss_-\\
\intertext{$\bt\ [\Ss_\pm,\A_\pm]$}
[s_+,x_+] &:= 0& [t_-,y_-] &:= 0\\
[s_+,y_-] &:= (sy)_+\in \A_+& [t_-,x_+] &:= (tx)_-\in \A_-\\
\intertext{$\bt\  [\A_\pm,\A_\pm]$}
[x_+,y_-] &:= V_{x,y}\in \Instrl(\A)&[y_-,y_-'] &:= \psi(y,y')_-\in \Ss_-\\
[x_+,x_+'] &:= \psi(x,x')_+\in \Ss_+\\
\intertext{$\bt \ [\Ss_\pm,\Ss_\pm]$}
[s_+,s_+']&:=0,&[t_-,t_-']&:=0\\
[s_+,t_-]&:=L_{s}L_{t}\in \Instrl(\A)
\end{alignat*}
for all $x,x',y,y'\in \A$, all $s,s',t,t'\in \Ss$, and all $V_{a,b},V_{a',b'}\in \Instrl(\A)$.
\end{construction}

In the case where $\A$ is a Jordan algebra, we have $\Ss=0$, and thus the Lie algebra $K(\A)$ has a $3$-grading;
in this case $K(\A)$ is exactly the Tits--Kantor--Koecher construction of a Lie algebra from a Jordan algebra.

It is shown in \cite[\S 5]{A79} that the structurable algebra $\A$ is simple if and only if $K(\A)$ is a simple Lie algebra,
and that $\A$ is central if and only if $K(\A)$ is central.

\begin{definition}
A structurable algebra $\A$ is called \emph{algebraic}, if $K(\A)$ is algebraic.
\end{definition}

\begin{definition}
            Let $\A$ be a structurable algebra over a field $k$ of characteristic $\neq 2,3$.
            An element $x\in\A$ is called an \emph{absolute zero divisor} if $U_xy=0$ for any $y\in\A$.
            The algebra $\A$ is called \emph{non-degenerate} if it has no non-trivial absolute zero divisors.
\end{definition}
If an element $x\in K(\A)_\si=\A_\si$ is an absolute zero divisor of $K(\A)$,
then it is represented by an absolute zero divisor of $\A$;
this follows from the fact that by Definition~\ref{def:Lie alg},
\[ [x_\sigma, [x_\sigma, y_{-\sigma}]] = -V_{x,y} x \in \A_\sigma \]
for all $x,y \in \A$.

The following theorem strengthens~\cite[Theorem 4.1.1]{BdMS}.

\begin{theorem}\label{thm:structurable}
Let $\A$  be a central simple structurable algebra over a field $k$ of characteristic different from $2,3$.
Then $\A$ is algebraic and non-degenerate.
The algebraic $k$\dash group $\GG=\Aut(K(\A))^\circ$
is an adjoint absolutely simple group of $k$\dash rank $\geq 1$, and $K(\A)=[\Lie(\GG),\Lie(\GG)]$.
\end{theorem}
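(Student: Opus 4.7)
The plan is to apply the theorem cited earlier from \cite[Theorem 4.1.8]{BdMS} to the 5-graded Lie algebra $K(\A)$, which gives precisely the desired conclusion provided its input is an algebraic central simple 5-graded Lie algebra over $k$ with $\LL\neq\LL_0$. So the task reduces to verifying three hypotheses for $K(\A)$.

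The first two hypotheses require essentially no work. Central simplicity of $K(\A)$ is exactly the equivalence from \cite[\S 5]{A79} recalled just before Definition~\ref{def:algebraic}: $\A$ is central simple if and only if $K(\A)$ is central simple. Nontriviality of the grading is immediate, since $\A$ is unital and nonzero, so $K(\A)_1=\A_+$ contains $1_+\neq 0$, hence $K(\A)\neq K(\A)_0$.

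The only substantive point is the algebraicity of $K(\A)$, and this is where the new characteristic~5 content of this paper is used. I would invoke Theorem~\ref{thm:Che-type}: since $K(\A)$ is a central simple 5-graded Lie algebra with $K(\A)\neq K(\A)_0$, it is of Chevalley type. Then Theorem~\ref{thm:Che-alg} converts Chevalley type into algebraicity, yielding that $K(\A)$ is algebraic. (In characteristic $\neq 5$ this step is already covered by Remark~\ref{rem:algebraic}, but the present argument is uniform across all characteristics $\neq 2,3$.)

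With all three hypotheses verified, \cite[Theorem 4.1.8]{BdMS} applied to $\LL=K(\A)$ gives that $\GG=\Aut(K(\A))^\circ$ is an adjoint absolutely simple $k$-group of $k$-rank $\geq 1$ and that $K(\A)=[\Lie(\GG),\Lie(\GG)]$, which is exactly the statement of Theorem~\ref{thm:structurable}. The main obstacle was not in this deduction itself, which is essentially formal, but in the preceding Theorem~\ref{thm:Che-type}; once that theorem is in hand, the current result is a one-line corollary whose only content is the translation from structurable algebras to 5-graded Lie algebras via the TKK-type construction $K(-)$.
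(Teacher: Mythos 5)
Your proof is correct and follows essentially the same route as the paper: the only new content is algebraicity of $K(\A)$, obtained exactly as in the paper via Theorem~\ref{thm:Che-type} (Chevalley type) followed by Theorem~\ref{thm:Che-alg}. The sole cosmetic difference is that you invoke the Lie-algebra version \cite[Theorem 4.1.8]{BdMS} directly for $K(\A)$, whereas the paper cites the structurable-algebra version \cite[Theorem 4.1.1]{BdMS} with the algebraicity hypothesis removed; since algebraicity of $\A$ is by definition algebraicity of $K(\A)$, these are interchangeable.
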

\begin{proof}
The Lie algebra $K(\A)$ is a central simple Lie algebra with a non-trivial 5-grading, hence it is
a Lie algebra of Chevalley type by Theorem~\ref{thm:Che-type}. Hence it is algebraic by Theorem~\ref{thm:Che-alg}.
By Lemma~\ref{lem:alg-nondegen} $K(\A)$ is non-degenerate, and then $\A$
is non-degenerate.
The remaining claims were established in~\cite[Theorem 4.1.1]{BdMS} under the additional assumption that $\A$ is algebraic.
\end{proof}

5-Graded Lie algebras and structurable algebras are closely related to Kantor pairs.

\begin{definition}[\cite{AF99}]\label{def:Kantor}
A \emph{Kantor pair} is a pair of finite-dimensional vector spaces $(K_+,K_-)$ over $k$
equipped with a trilinear product
\[
\lK\cdot,\cdot,\cdot\rK \colon K_\si\times K_{-\si}\times K_\si\to K_\si,\quad \si\in\{-1,1\},
\]
satisfying the following two identities:
        \begin{compactitem}
           \item[(KP1)]  $[\VK_{x,y}, \VK_{z,w}] = \VK_{\lK x,y,z\rK,w} - \VK_{z,\lK y,x,w \rK}$;
           \item[(KP2)]  $\KK_{a,b}\VK_{x,y}+\VK_{y,x}\KK_{a,b}=\KK_{\KK_{a,b}x,y}$;
        \end{compactitem}
where $\VK_{x,y}z:=\lK x,y,z\rK$ and $\KK_{a,b}z:=\lK a,z,b \rK -\lK b,z,a \rK$.
\end{definition}

If $\LL=\bigoplus_{i=-2}^2\LL_i$ is a 5-graded Lie algebra and $\zeta$ is its grading derivation, then the pair $(\LL_1,\LL_{-1})$ is a Kantor pair with
respect to the triple product $\{x,y,z\}=-[[x,y],z]$ for all $x,z\in\LL_\si$, $y\in\LL_{-\si}$
(alternatively, one may set $\{x,y,z\}=[[x,y],z]$, see e.g.~\cite{GGLN}).
Conversely, by~\cite[Theorem 7 and p. 535]{AF99},
for any Kantor pair $(K_+,K_-)$ there exists a unique up to isomorphism $5$-graded Lie algebra $\G=\G(K_+,K_-)$,
called the standard embedding of $(K_+,K_-)$,
such that the associated Kantor pair $(\G_+,\G_-)$ is isomorphic to $(K_+,K_-)$, $\G_{2\si}=[\G_\si,\G_\si]$,
$\G_0=k\zeta+[\G_\si,\G_{-\si}]$, and $\ad:\G_{-2}\oplus\G_0\oplus\G_2\to \End(\G_1\oplus\G_{-1})$ is injective.

If $\A$ is a structurable algebra over $k$, then the pair $(\A,\A)$ with the triple product $\lK x,y,z\rK=2\{x,y,z\}_{\A}$
is a Kantor pair (we double the triple product in accordance with the conventions of~\cite{AF99}). It is easy to see
that the corresponding standard embedding $\G(\A,\A)$ is graded-isomorphic to $K(\A)+k\zeta$~\cite[\S~3.2]{BdMS}.

\begin{definition}
An \emph{ideal} of a Kantor pair is a pair of subspaces $I_\si\subseteq K_\si$ such that
$$
\{I_\si,V_{-\si},V_\si\}+\{V_\si,I_{-\si},V_\si\}+\{V_\si,V_{-\si},I_\si\}\subseteq I_\si
$$
for $\si=\pm 1$. A Kantor pair with non-zero product is called \emph{simple}, if it has no non-trivial ideals.
The \emph{centroid} of a Kantor pair is the set of all $(c_+,c_-)\in\End(K_+)\oplus\End(K_-)$ such that
$$
c_\si(\{x_\si,y_{-\si},z_\si\})=\{c_\si(x_\si),y_{-\si},z_\si\}=\{x_\si,c_{-\si}(y_{-\si}),z_\si\}=
\{x_\si,y_{-\si},c_\si(z_\si)\}
$$
for all $x_\si,z_\si\in K_\si$, $y_{-\si}\in K_{-\si}$. A Kantor pair is called \emph{central}, if the centroid
coincides with the set of scalar pairs $(c,c)$, $c\in k$.
\end{definition}

According to the following result, every $5$-grading on a simple Lie algebra of Chevalley type over a field
of characteristic $\neq 2,3$ arises from a Kantor pair. In particular, every isotropic adjoint simple
algberaic group can be constructed from a simple Kantor pair.

\begin{lemma}\label{lem:4.3.3}\cite[Lemma 4.3.3]{BdMS}
Let $k$ be a field of characteristic different from $2,3$. Let $\GG$ be an adjoint simple algebraic group over
$k$. Let $\LL=\Lie(\GG)$ be its Lie algebra, and let $\LL=\bigoplus\limits_{i=-2}^2\LL_i$ be any $5$\dash grading on
$\LL$ such that $\LL_1\oplus\LL_{-1}\neq 0$.
Then $(\LL_1,\LL_{-1})$ is a central simple Kantor pair with respect
to the triple product operation $\LL_\sigma\times \LL_{-\sigma}\times \LL_{\sigma}\to \LL_\sigma$
given by
\[
\lK x,y,z \rK=-[[x,y],z],
\]
and its standard $5$\dash graded embedding $\G=\G(\LL_1,\LL_{-1})$
is canonically isomorphic to the graded Lie subalgebra $[\LL,\LL]+k\zeta$ of $\LL$.
\end{lemma}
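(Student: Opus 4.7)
The plan is to verify the three assertions in turn: that $(\LL_1, \LL_{-1})$ with triple product $\lK x,y,z\rK = -[[x,y],z]$ satisfies the Kantor pair axioms, that this pair is central simple, and that its standard $5$-graded envelope is canonically identified with $[\LL,\LL]+k\zeta$ inside $\LL$.

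First, I would verify the Kantor pair axioms (KP1) and (KP2) by direct computation from the Jacobi identity. Writing $\VK_{x,y}z = -[[x,y],z]$, the identity (KP1) becomes the assertion that $\sigma \mapsto -\ad([x,y])|_{\LL_\sigma}$ is compatible with the commutator of such operators, which is an immediate consequence of the graded Jacobi identity in $\LL$. For (KP2), one expands $\KK_{a,b}z = -[[a,z],b] + [[b,z],a]$ (which lies in $\LL_{\pm 2}$ when $a,b\in \LL_{\pm 1}$ and $z\in\LL_{\mp 1}$) and reduces both sides to iterated brackets which agree by Jacobi. These manipulations are formal and hold in any $5$-graded Lie algebra over $k$ with $2,3$ invertible.

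Second, for simplicity and centrality, I would invoke Lemma~\ref{lem:Gseparable}(i). Given a Kantor-pair ideal $(I_1,I_{-1})$ of $(\LL_1,\LL_{-1})$, define
\[
I_{\pm 2} := [\LL_{\pm 1}, I_{\pm 1}] + [I_{\pm 1}, \LL_{\pm 1}].
\]
One checks that the resulting family $\{I_i\}_{i\neq 0}$ satisfies the inclusions \eqref{eq:Iij}--\eqref{eq:Iiii-2}: these come from (KP1) and the fact that $[[a,b],z]\in\LL_\sigma$ whenever $a\in\LL_\sigma$, $b\in\LL_{-\sigma}$ and $z\in\LL_\sigma$. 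Since $\LL$ is central simple, the dichotomy of Lemma~\ref{lem:Gseparable}(i) forces either $I_{\pm 1}=0$ or $I_{\pm 1}=\LL_{\pm 1}$, giving simplicity of the Kantor pair. Centrality is obtained similarly: any centroid element gives rise to a compatible family of scalars in each $\LL_i$, and the dichotomy forces it to be a single scalar.

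Third, to identify $\G(\LL_1\oplus\LL_{-1})$ with $[\LL,\LL]+k\zeta$, I would appeal to the universal property of the standard graded embedding. By the Allison--Faulkner construction, $\G(\TT)$ is the unique (up to isomorphism) $5$-graded Lie algebra generated in degrees $\pm 1$ by $\TT_{\pm 1}$, whose degree-$0$ part is spanned by the derivations $\ad[x,y]$ for $x\in\TT_1,y\in\TT_{-1}$ together with a grading derivation, and whose degree-$\pm 2$ parts arise as $[\TT_{\pm 1},\TT_{\pm 1}]$. The inclusions $\TT_{\pm 1}\hookrightarrow\LL_{\pm 1}$ extend to a graded Lie algebra homomorphism $\G(\TT)\to [\LL,\LL]+k\zeta$; the grading derivation of $\G(\TT)$ is sent to $\zeta$. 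Surjectivity is clear once one knows $\LL_{\pm 2} = [\LL_{\pm 1},\LL_{\pm 1}]$, and this equality follows from the simplicity argument above (apply Lemma~\ref{lem:Gseparable}(i) with $I_{\pm 1}=\LL_{\pm 1}$ and $I_{\pm 2}=[\LL_{\pm 1},\LL_{\pm 1}]$). Injectivity follows from the universal/minimal nature of $\G(\TT)$: any graded Lie ideal of $\G(\TT)$ that meets $\G(\TT)_{\pm 1}$ trivially is itself trivial by the same Kantor-pair ideal analysis.

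The main obstacle is the last step: matching the intrinsic bracket of $\G(\LL_1\oplus\LL_{-1})$, defined through the inner derivation algebra of the triple system, with the bracket inherited from $\LL$. The degree-$0$ component of $\G(\TT)$ is built from pairs of Kantor-pair operators modulo universal relations, and one must verify that sending $V_{x,y}\mapsto -\ad[x,y]|_{\LL_{\neq 0}}$ together with $\zeta\mapsto\zeta$ is well-defined and bracket-preserving. Once the identities of Definition~\ref{def:Kantor} are in place and one has $\LL_{\pm 2}=[\LL_{\pm 1},\LL_{\pm 1}]$, this is a bookkeeping exercise rather than a deep point.
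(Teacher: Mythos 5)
Your overall strategy (verify the Kantor identities by hand, get simplicity from Lemma~\ref{lem:Gseparable}(i), and build the isomorphism with $[\LL,\LL]+k\zeta$ explicitly) is genuinely different from the paper's proof, which is essentially by citation: the Kantor-pair axioms and the identification of $\G(\LL_1\oplus\LL_{-1})$ with $[\LL,\LL]+k\zeta$ are quoted from \cite[Lemma 4.3.3]{BdMS}, and central simplicity is obtained by noting that $[\LL,\LL]$ is central simple by \cite[Lemma 4.1.6]{BdMS}, that it envelopes the sign-changed pair $(\LL_1,\LL_{-1})'$, and by applying \cite[Theorem 4.20]{AllFauSmi}. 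Your reduction of simplicity to Lemma~\ref{lem:Gseparable}(i) can be made to work: with $I_{\pm 2}=[\LL_{\pm1},I_{\pm1}]$ the inclusions \eqref{eq:Iij}--\eqref{eq:Iiii-2} do hold (the $i=\pm2$ cases need short Jacobi computations using $\LL_{\pm3}=0$), and the auxiliary application with $I_{\pm1}=\LL_{\pm1}$, $I_{\pm2}=[\LL_{\pm1},\LL_{\pm1}]$ indeed yields $\LL_{\pm2}=[\LL_{\pm1},\LL_{\pm1}]$ --- but note that this equality is already needed in the ideal argument (to check $[\LL_{\pm2},I_{\mp1}]\subseteq I_{\mp1}$ from the triple-product axioms), so it must be established first, not in your third step.

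There are, however, three genuine gaps as written. First, you apply Lemma~\ref{lem:Gseparable}(i) ``since $\LL$ is central simple'', but $\LL=\Lie(\GG)$ for adjoint simple $\GG$ need not be central simple (e.g.\ $\GG=\PGL_5$ in characteristic $5$, which is exactly the case this paper cares about); the lemma must be applied to $[\LL,\LL]$, which by \cite[Lemma 4.1.6]{BdMS} is central simple and agrees with $\LL$ in all nonzero degrees --- this is precisely why the paper passes to $[\LL,\LL]$. Second, your centrality argument is not a proof: over a non-algebraically-closed field a centroid element of the pair does not act by ``a compatible family of scalars'', so the subspace dichotomy of Lemma~\ref{lem:Gseparable}(i) does not apply to it directly. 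A correct repair within your framework is to observe that the lemma is stated over an arbitrary field extension $K$, so your simplicity argument shows the pair stays simple after every base change, which gives centrality; alternatively one quotes \cite[Theorem 4.20]{AllFauSmi} as the paper does. Third, the standard graded embedding of \cite{AF99} has no universal property of the kind you invoke --- it is a concrete construction out of operators on $\TT$ --- so the isomorphism with $[\LL,\LL]+k\zeta$ must be built by hand, as you concede at the end; its well-definedness (e.g.\ that $\sum_i V_{x_i,y_i}=0$ as operators forces $\sum_i[x_i,y_i]=0$ in $\LL_0$, and that $\ad$ is injective on $\LL_{\pm2}$) again rests on the centrality of $[\LL,\LL]$ and on $\LL_{\pm2}=[\LL_{\pm1},\LL_{\pm1}]$, none of which is spelled out in the proposal.
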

\begin{proof}
By~\cite[Lemma 4.3.3]{BdMS} $(\LL_1,\LL_{-1})$ is a Kantor pair, and $\G(\LL_1,\LL_{-1})$ is as required.
It remains to prove centrality and simplicity.
By~\cite[Lemma 4.1.6]{BdMS} the $k$-Lie algebra $[\LL,\LL]$ is central simple, and differs from $\LL$ only in the
grading $0$ component. Let $(\LL_1,\LL_{-1})'$ be the Kantor pair which differs from $(\LL_1,\LL_{-1})$
by the sign of the triple product, i.e. $\lK x,y,z \rK'=[[x,y],z]$. Then
$[\LL,\LL]$ envelopes the Kantor
pair $(\LL_1,\LL_{-1})'$ in the sense of~\cite{AllFauSmi}. Then by~\cite[Theorem 4.20]{AllFauSmi} $(\LL_1,\LL_{-1})'$
is central simple. Clearly, this is equivalent to $(\LL_1,\LL_{-1})$ being central simple.
\end{proof}

Now we can also establish the converse, namely, that any simple Kantor pair
over a field
of characteristic $\neq 2,3$
arises from an isotropic
simple algebraic group.

\begin{theorem}\label{thm:kp}
Let $(K_+,K_-)$ be a central simple Kantor pair over a field $k$ of characteristic $\neq 2,3$, and
let $\G=\G(K_+,K_-)$ be its standard $5$-graded embedding.
Then
the algebraic $k$\dash group $\GG=\Aut(\G)^\circ$
is an adjoint absolutely simple group of $k$\dash rank $\geq 1$, and $\G\cong [\Lie(\GG),\Lie(\GG)]+k\zeta$.
\end{theorem}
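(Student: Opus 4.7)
The plan is to apply Theorem~\ref{thm:Che-type} together with the BdMS structure theorem \cite[Theorem 4.1.8]{BdMS} (stated just before Theorem~\ref{thm:Che-alg} in the excerpt) to the standard embedding, and then to invoke Lemma~\ref{lem:4.3.3} in the reverse direction to recover the description of $\G(K_+\oplus K_-)$. Put $\G:=\G(K_+\oplus K_-)$; by construction $\G$ is 5-graded with $\G_{\pm 1}=K_\pm\neq 0$, so the grading is non-trivial.

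The first and central step is to establish that $\G$ itself is a central simple Lie algebra. This is the converse of what is used at the end of the proof of Lemma~\ref{lem:4.3.3}: $\G$ is a graded Lie algebra enveloping the pair $(K_+,K_-)$ in the sense of \cite{AllFauSmi}, and by \cite[Theorem 4.20]{AllFauSmi} central simplicity of the pair translates to central simplicity of a suitable enveloping Lie algebra.

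Once $\G$ is known to be central simple, Theorem~\ref{thm:Che-type} implies that $\G$ is of Chevalley type, and Theorem~\ref{thm:Che-alg} then gives that $\G$ is algebraic. The BdMS Theorem 4.1.8 now applies and produces $\GG:=\Aut(\G)^\circ$, an adjoint absolutely simple $k$-group of $k$-rank $\geq 1$, with $\G=[\Lie(\GG),\Lie(\GG)]$ and $\Lie(\GG)\cong\newl{\G}$. Since $\newl{\G}_{\pm 1}=\G_{\pm 1}=K_\pm$ by the construction of $\newl{\G}$, applying Lemma~\ref{lem:4.3.3} to $\GG$ with the induced 5-grading on $\Lie(\GG)\cong\newl{\G}$ identifies the resulting Kantor pair with $(K_+,K_-)$ and identifies its standard embedding with $[\Lie(\GG),\Lie(\GG)]+k\zeta$. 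This yields the required isomorphism $\G\cong[\Lie(\GG),\Lie(\GG)]+k\zeta$.

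The hard part is the first step: transferring central simplicity from the Kantor pair to the Lie algebra $\G$. The grading-0 component of $\G$ is not directly controlled by the Kantor pair axioms, so one must carefully rule out the existence of a non-trivial graded ideal concentrated there, which is where the enveloping algebra machinery of \cite{AllFauSmi} does the work.
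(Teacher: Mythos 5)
There is a genuine gap at your declared ``central step'': the standard graded embedding $\G=\G(K_+\oplus K_-)$ is in general \emph{not} a central simple Lie algebra, so you cannot apply Theorem~\ref{thm:Che-type} and \cite[Theorem 4.1.8]{BdMS} to $\G$ itself. By construction $\G$ contains the grading derivation $\zeta$ in degree $0$, and the Lie algebra $\mathcal{K}=\mathfrak{K}((K_+,K_-)')$ of \cite[\S~4.3]{AllFauSmi} (spanned by $K_\pm$, the skew spaces and the inner operators $\VK_{x,y}$) is a graded ideal of $\G=k\zeta+\mathcal{K}$. Whenever $\zeta\notin\mathcal{K}$ --- which does occur, for instance in characteristic $5$ when the grading derivation of $[\Lie(\GG),\Lie(\GG)]$ is not an inner derivation, and which is exactly why the theorem's conclusion is $[\Lie(\GG),\Lie(\GG)]+k\zeta$ rather than $[\Lie(\GG),\Lie(\GG)]$ --- the subalgebra $\mathcal{K}$ is a proper nonzero ideal of $\G$, so $\G$ is not simple. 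Relatedly, \cite[Theorem 4.20]{AllFauSmi} is the tool used in Lemma~\ref{lem:4.3.3} to pass \emph{from} a central simple enveloping Lie algebra \emph{to} central simplicity of the pair; it does not give you central simplicity of an arbitrary envelope such as $\G$, and indeed no statement could, since the envelope $\G$ genuinely fails to be simple in the cases above.

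The paper's proof avoids this by working with $\mathcal{K}$ instead of $\G$: it first replaces $(K_+,K_-)$ by the opposite-sign pair to match the conventions of \cite{AllFauSmi}, invokes \cite[Corollary 4.14]{AllFauSmi} to conclude that $\mathcal{K}$ is central simple, then applies Theorem~\ref{thm:Che-type}, Theorem~\ref{thm:Che-alg} and \cite[Theorem 4.1.8]{BdMS} to $\mathcal{K}$ to obtain that $\GG=\Aut(\mathcal{K})^\circ$ is adjoint absolutely simple of $k$-rank $\geq 1$ with $\mathcal{K}\cong[\Lie(\GG),\Lie(\GG)]$, and finally uses \cite[Lemma 4.1.6]{BdMS} to identify $\Aut(\G)^\circ$ with $\GG$ and hence $\G=k\zeta+\mathcal{K}$ with $[\Lie(\GG),\Lie(\GG)]+k\zeta$. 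To repair your argument you would need to make exactly this substitution: transfer central simplicity to $\mathcal{K}$ (not to $\G$), run the Chevalley-type and algebraicity arguments on $\mathcal{K}$, and then add the step relating $\Aut(\G)^\circ$ to $\Aut(\mathcal{K})^\circ$, which your proposal currently omits.
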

\begin{proof}
By definition, we have $\G_1=K_+$, $\G_{-1}=K_-$.
Let $(\G_1,\G_{-1})'$ be the Kantor pair which differs from $(\G_1,\G_{-1})$
by the sign of the triple product, i.e. $\lK x,y,z \rK'=-\lK x,y,z \rK=[[x,y],z]$.
Since $(\G_1,\G_{-1})$ is central simple, $(\G_1,\G_{-1})'$ is also central simple.
Let $\mathcal{K}=\mathfrak{K}((\G_1,\G_{-1})')$ be the 5-graded Lie algebra associated to $(\G_1,\G_{-1})'$ in~\cite[\S~4.3]{AllFauSmi}.
By construction, $\G=k\zeta+\mathcal{K}$.
By~\cite[Corollary 4.14]{AllFauSmi} the Lie algebra $\mathcal{K}$ is central simple.
Then by Theorem~\ref{thm:Che-type} $\mathcal{K}$ is of Chevalley type.
Then by Theorem~\ref{thm:Che-alg} it is algebraic. Then by~\cite[Theorem 4.1.8]{BdMS}
$\GG=\Aut(\mathcal{K})^\circ$ is an adjoint absolutely simple group of $k$\dash rank $\geq 1$, and
$\mathcal{K}\cong [\Lie(\GG),\Lie(\GG)]$. Then by~\cite[Lemma 4.1.6]{BdMS} we have $\Aut(\G)^\circ\cong\GG$.
\end{proof}

\begin{corollary}\label{cor:ka-degen}
Let $(K_+,K_-)$ be a simple Kantor pair over a commutative ring $k$ such that $2,3\in k^\times$. Then
$(K_+,K_-)$ is non-degenerate in the sense of~\cite{GGLN}.
\end{corollary}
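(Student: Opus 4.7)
The plan is to deduce non-degeneracy of the Kantor pair from the non-degeneracy of its associated $5$-graded Lie algebra, which is a Lie algebra of Chevalley type by Theorem~\ref{thm:kp}. I will (i) reduce to the case where $k$ is a field, (ii) identify $\G(K_+\oplus K_-)$ as (a central extension of) a Chevalley-type Lie algebra, and (iii) transfer non-degeneracy back through the Kantor-to-Lie correspondence of zero divisors.

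For (i), let $Z$ denote the centroid of $(K_+,K_-)$, i.e.\ the commutative ring of pairs $(f_+,f_-)$ of $k$-linear endomorphisms commuting with all triple products. A standard argument, analogous to the one for simple non-associative algebras, shows that $Z$ is a field containing $\tfrac{1}{2},\tfrac{1}{3}$, and that $(K_+,K_-)$ becomes a central simple Kantor pair over~$Z$. Since the condition defining an absolute zero divisor is phrased purely in terms of the triple product, without involving the scalars, $(K_+,K_-)$ is non-degenerate over $k$ if and only if it is non-degenerate over $Z$. So I may assume $k$ itself is a field of characteristic $\neq 2,3$. For (ii), Theorem~\ref{thm:kp} then gives an isomorphism $\G(K_+\oplus K_-)\cong[\Lie(\GG),\Lie(\GG)]+k\zeta$ for the adjoint absolutely simple algebraic $k$-group $\GG=\Aut(\G(K_+\oplus K_-))^\circ$, and its derived subalgebra is a central simple Lie algebra of Chevalley type. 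As cited in the introduction, any such Lie algebra over a field of characteristic $\neq 2,3$ is non-degenerate: it contains no nonzero sandwich element $x$ (i.e.\ no nonzero $x$ with $\ad(x)^2=0$).

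For (iii), suppose $0\neq x\in K_\sigma$ is an absolute zero divisor of the Kantor pair. Viewing $x$ as an element of $\G(K_+\oplus K_-)_\sigma$, the basic identity
\[
[x,[x,y]]=-\lK x,y,x\rK=0
\]
for $y\in K_{-\sigma}$ is the direct analogue of the structurable-algebra formula recalled earlier in the paper and yields the vanishing of $\ad(x)^2$ on the grading-$(-\sigma)$ component. The vanishing on the remaining nontrivial components $\G(K_+\oplus K_-)_0$ and $\G(K_+\oplus K_-)_{-2\sigma}$ must then be deduced from the Jacobi identity and the Kantor-pair identities (KP1)--(KP2). Once this is done, $x$ is a nonzero sandwich element of $\G(K_+\oplus K_-)$, contradicting the previous paragraph. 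I expect this last verification — that the Kantor-pair absolute-zero-divisor condition as formulated in~\cite{GGLN} propagates to a genuine Lie-algebra sandwich condition on every graded piece of the standard embedding — to be the main technical hurdle; it is where the precise form of the definition from~\cite{GGLN} is used in its full strength rather than only the weak identity $\lK x,y,x\rK=0$.
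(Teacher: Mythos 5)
Your steps (i) and (ii) coincide with the paper's argument: reduce to a central simple Kantor pair over a field (the paper notes that simplicity forces $k$ to be a field and then extends scalars, which is the same reduction your centroid argument achieves), apply Theorem~\ref{thm:kp} to identify the standard graded embedding $\LL=\G(K_+\oplus K_-)$ with a Chevalley-type Lie algebra, and invoke non-degeneracy of Chevalley-type Lie algebras (the paper passes through $\LL\otimes_k\bar k$ and descends, since Seligman's statement is for the algebraically closed case --- a minor point you gloss over but which is easily repaired).

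The genuine gap is your step (iii), and you have flagged it yourself: the transfer of non-degeneracy from $\LL$ back to the pair is precisely the content of \cite[Corollary 2.5]{GGLN}, which is what the paper cites at this point, and your proposal does not actually prove it. The issue is not a routine Jacobi-identity verification. From $\lK x,y,x\rK=0$ alone one only gets $\ad(x)^2=0$ on $\LL_{-\sigma}$. On the remaining components the obstruction is concrete: for $V_{a,b}\in\LL_0$ one has $[x,[x,V_{a,b}]]$ equal (up to sign) to the element $\KK_{x,\VK_{a,b}x}\in\LL_{2\sigma}$, and for $\KK_{c,d}\in\LL_{-2\sigma}$ one gets (up to sign) $\VK_{x,\KK_{c,d}x}\in\LL_0$; neither of these vanishes as a formal consequence of (KP1)--(KP2) applied to the single identity $\lK x,y,x\rK=0$, and indeed it is not even clear a priori that $x$ itself is a sandwich of $\LL$ (in the structurable model this amounts to the non-obvious identities $\psi(x,V_{a,b}x)=0$ and $V_{x,sx}=0$ for skew $s$). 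Establishing that an absolute zero divisor of the pair produces a nonzero sandwich of the standard embedding (or, equivalently, that non-degeneracy of $\LL$ implies non-degeneracy of the pair) is exactly the theorem of Garc\'ia--G\'omez Lozano--Neher, proved there by passing through the associated Lie triple system; without either reproducing that argument or citing it, your proof is incomplete at its decisive step.
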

\begin{proof}
Since $(K_+,K_-)$ is simple, $k$ does not have non-trivial ideals, and hence is a field. Extending $k$, we can assume that
$(K_+,K_-)$ is central simple. Then by Theorem~\ref{thm:kp} its standard 5-graded embedding
$\LL$ is a central simple Lie
algebra of Chevalley type, and, in particular, algebraic. Then $\LL$ is non-degenerate by Lemma~\ref{lem:alg-nondegen}.
Then by~\cite[Corollary 2.5]{GGLN} $(K_+,K_-)$ is non-degenerate.
\end{proof}

The following lemmas provides a criterion that a Kantor pair (or, equivalently, its 5-graded standard embedding)
originates from a structurable algebra via Allison's construction. It is a combination of~\cite[Corollaries 14 and 15]{AF99}.
This result is crucial for the classification of 5-gradings on simple Lie algebras of Chevalley type that correspond
to structurable algebras (Theorems~\ref{thm:A-e} and~\ref{thm:5-class} below).

\begin{lemma}\label{lem:struct-kan}
Let $R$ be a commutative ring with $2,3\in R^\times$. Let $\G$ be a 5-graded Lie algebra over $R$ which is
the standard 5-graded embedding of a Kantor pair $(\G_{-1},\G_1)$ over $R$.
Then $(\G_{-1},\G_1)$ is a Kantor pair associated to a structurable algebra $\A$ over $R$
if and only if there exist $u\in\G_{1}$, $v\in\G_{-1}$ such that
$\zeta=[u,v]$ is the grading derivation of $\G$.
\end{lemma}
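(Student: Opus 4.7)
\emph{Proof plan.} The plan is to verify both directions of the equivalence separately.

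Forward direction ($\Rightarrow$): suppose $(\LL_{-1},\LL_1)$ is the Kantor pair of a structurable algebra $\A$, so that $\LL\cong K(\A)$. I take $u:=1_+\in\A_+=\LL_1$ and $v:=1_-\in\A_-=\LL_{-1}$, where $1$ is the unit of $\A$. By Definition~\ref{def:Lie alg} we have $[u,v]=[1_+,1_-]=V_{1,1}$, and direct computation gives $V_{1,1}(z)=(1\cdot\bar 1)z+(z\cdot\bar 1)\cdot 1 - (z\cdot\bar 1)\cdot 1=z$, so $V_{1,1}=\id_\A$. Using the explicit formulas $A^\eps=A-L_{A(1)+\overline{A(1)}}$ and $A^\delta=A+R_{\overline{A(1)}}$ together with $V_{1,1}(1)=1$ and $\bar 1=1$, I obtain $V_{1,1}^\eps=-\id$, $V_{1,1}^\delta=2\id$, $V_{1,1}^{\eps\delta}=-2\id$. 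The Lie bracket formulas of Definition~\ref{def:Lie alg} then show that $V_{1,1}$ acts on $\A_+,\A_-,\Ss_+,\Ss_-$ as multiplication by $1,-1,2,-2$, respectively, which is precisely the action of the grading derivation~$\zeta$.

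Backward direction ($\Leftarrow$): assume $u\in\LL_1$, $v\in\LL_{-1}$ with $[u,v]=\zeta$. I will construct a structurable algebra $\A$ realizing $(\LL_{-1},\LL_1)$ as its Kantor pair. Set $\A:=\LL_1$ as an $R$-module, with unit $1_\A:=u$. Define $M\in\End_R(\LL_1)$ by $M(x):=[[x,v],u]$. A short Jacobi-identity computation using $[u,v]=\zeta$, $[\zeta,\LL_1]=\LL_1$, $[\zeta,\LL_2]=2\LL_2$, and $[u,\LL_2]\subseteq\LL_3=0$ shows that $N(x):=[[u,x],v]$ satisfies $N^2=-2N$, equivalently $M^2-4M+3\id=0$. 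Since $2,3\in R^\times$, this gives a direct sum decomposition $\A=\Herm\oplus\Ss$ with $\Herm:=\ker(M-\id)$, $\Ss:=\ker(M-3\id)$, and an $R$-linear involution $\bar x:=2x-M(x)$ whose $\pm 1$-eigenspaces are $\Herm$ and $\Ss$. Applying $M$ to $[s,v]$ for $s\in\LL_2$ and using the same Jacobi identities also gives $\Ss=[\LL_2,v]$, exhibiting the natural embedding $\Ss\hookrightarrow\A$.

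The main obstacle is to define the multiplication on $\A$ and verify the structurable axiom~\eqref{struct id}. Motivated by the identity $V_{x,1}(y)=xy+yx-y\bar x$ valid in every structurable algebra, which translates to $[[x,v],y]=xy+yx-y\bar x$ for $x,y\in\A$, I impose this as the defining relation. Solving it piecewise on $\A=\Herm\oplus\Ss$ determines the product uniquely: for $h_i\in\Herm$ one has $h_1 h_2=[[h_1,v],h_2]$, while the other components require more involved combinations of $[[\cdot,v],\cdot]$ with coefficients in $\frac{1}{3}R$. The verification of the structurable axiom~\eqref{struct id} is then the most computationally intensive step; it unfolds, via the defining relations, to an instance of the Jacobi identity in $\LL$ combined with the Kantor pair axioms (KP1) and (KP2) for $(\LL_{-1},\LL_1)$. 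Finally, the canonical graded Lie algebra homomorphism $K(\A)\to\LL$ sending $1_\pm\mapsto u,v$ agrees with the identity on $\LL_{\pm 1}$, and is therefore an isomorphism of $5$-graded Lie algebras by the universal property of the standard graded embedding of a Kantor pair, so that $(\LL_{-1},\LL_1)$ is indeed the Kantor pair of $\A$.
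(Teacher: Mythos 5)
Your forward direction is correct and coincides with the paper's argument: the images $1_\pm$ of the unit of $\A$ satisfy $[1_+,1_-]=V_{1,1}$, which acts as the grading derivation of $K(\A)$.

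The backward direction, however, has a genuine gap. The preliminary steps are fine: with $N(x)=[[u,x],v]$ a short Jacobi computation using $[u,v]=\zeta$ and $\LL_3=0$ does give $N^2=-2N$, hence the involution $\bar x=2x-M(x)$, and solving $[[x,v],y]=xy+yx-y\bar x$ piecewise on $\Herm\oplus\Ss$ does pin down a candidate bilinear product (invertibility of $3$ enters exactly where you say). But everything after that is asserted rather than proved. You never verify that the product is compatible with the involution (i.e.\ $\overline{xy}=\bar y\,\bar x$), that $u$ is a two-sided unit, or that the structurable identity \eqref{struct id} holds; your claim that this ``unfolds to an instance of the Jacobi identity combined with (KP1) and (KP2)'' is precisely the nontrivial theorem of Allison and Faulkner that a Kantor pair containing a $1$-invertible element of the form $(x,0)$ arises from a structurable algebra --- it is not a routine unwinding. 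Moreover, the statement to be proved is that $(\LL_{-1},\LL_1)$ \emph{with its given triple product} is the Kantor pair of $\A$; your sketch only builds an algebra structure on $\LL_1$ and never identifies $\LL_{-1}$ with a second copy of $\A$, nor checks that the given triple products on both components agree with the $V$-operators of $\A$. The appeal to a ``universal property'' of the standard graded embedding does not supply this identification. The paper's proof avoids all of this by quoting the relevant results: since $[u,v]=\zeta$ forces $V_{u,v}=-\id_{\LL_1}$ and $V_{v,u}=-\id_{\LL_{-1}}$, the element $(u,0)$ is $1$-invertible with $\hat u=-2v$ by \cite[Corollary 14]{AF99}, and \cite[Corollary 15]{AF99} then produces the structurable algebra. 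Either cite these results as the paper does, or carry out the Allison--Faulkner construction in full; as written, the hardest part of your argument is missing.
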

\begin{proof}
Assume first that the Kantor pair $(\G_{-1},\G_1)$ is isomorphic to $(\A,\A)$, and $\G$ is graded-isomorphic to $K(\A)+k\zeta$.
Let $1\in\G_1$, $\hat 1\in\G_{-1}$ denote
the images of $1\in\A$. By the definition of $K(\A)$, the element $V_{1,1}=[1_+,1_-]\in K(\A)_0$ acts as the grading derivation on $K(\A)$. Then
its image $[1,\hat{1}]$ in $\G$ coincides with $\zeta$.

Conversely, assume that $\zeta=[u,v]$ for some $u\in\G_{1}$, $v\in\G_{-1}$.
By~\cite[Corollary 14]{AF99} an element $(x,0)\in\G_1\oplus\G_2$ is 1-invertible if and only if there is $\hat  x\in\G_{-1}$
such that $V_{x,\hat  x}=2\id_{\G_1}$, $V_{\hat x,x}=2\id_{\G_{-1}}$. Since $[u,v]=\zeta$, we have $V_{u,v}=-\id_{\G_1}$
and $V_{v,u}=-\id_{\G_{-1}}$. Then $x=u$ is 1-invertible with $\hat x=-2v$. Since $(\G_1,\G_{-1})$ contains a 1-invertible
element of the form $(x,0)$, by~\cite[Corollary 15]{AF99} it is isomorphic to a Kantor pair associated with
a structurable algebra.
\end{proof}

\begin{lemma}\label{lem:A-zeta}
Let $k$ be a field of characteristic different from $2,3$. Let $\GG$ be an adjoint simple algebraic group over
$k$. Let $\LL=\Lie(\GG)$ be its Lie algebra, and let $\LL=\bigoplus\limits_{i=-2}^2\LL_i$ be any $5$\dash grading on
$\LL$ such that $\LL_1\oplus\LL_{-1}\neq 0$, and let $\zeta\in\LL_0$ be the grading derivation.
Then $\zeta=[u,v]$ for some $u\in\LL_{1}$, $v\in\LL_{-1}$ if and only if there is a structurable algebra $\A$ over $k$
such that $[\LL,\LL]$ is graded-isomorphic to $K(\A)$.
\end{lemma}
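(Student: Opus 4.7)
The plan is to reduce to Lemma~\ref{lem:struct-kan} applied to the standard 5-graded embedding $\G(\LL_1 \oplus \LL_{-1})$ of the Kantor pair $(\LL_1, \LL_{-1})$. By Lemma~\ref{lem:4.3.3}, this embedding is canonically identified with the graded subalgebra $[\LL,\LL] + k\zeta$ of $\LL$; under this identification, for $u \in \LL_1$ and $v \in \LL_{-1}$ the bracket $[u,v]$ is the same in either algebra, and $\zeta$ remains the grading derivation of $\G(\LL_1 \oplus \LL_{-1})$. Lemma~\ref{lem:struct-kan} therefore yields: $\G(\LL_1 \oplus \LL_{-1}) \cong K(\A)$ as graded Lie algebras for some structurable algebra $\A$ if and only if there exist $u, v$ with $\zeta = [u,v]$.

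It thus suffices to prove $\G(\LL_1 \oplus \LL_{-1}) \cong K(\A)$ if and only if $[\LL,\LL] \cong K(\A)$. The key observation is that $K(\A)$ is perfect: since $V_{1,1} = \id_\A$, one has $\ad_{V_{1,1}}|_{\A_\pm} = \pm\id$ and $\ad_{V_{1,1}}|_{\Ss_\pm} = \pm 2\,\id$, while $[\A_+, \A_-] = \Instrl(\A) = K(\A)_0$, so every graded component lies in $[K(\A), K(\A)]$. Meanwhile $[\G(\LL_1 \oplus \LL_{-1}), \G(\LL_1 \oplus \LL_{-1})] = [\LL,\LL]$: indeed, $[\LL,\LL]$ is central simple and hence perfect by Lemma~4.1.6 of~\cite{BdMS}, and $[\zeta, \LL] \subseteq [\LL,\LL]$ since $\zeta$ is a derivation. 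Hence $\G(\LL_1 \oplus \LL_{-1}) \cong K(\A)$ forces $\G(\LL_1 \oplus \LL_{-1}) = [\LL,\LL]$, giving $\zeta \in [\LL,\LL]$ and $[\LL,\LL] \cong K(\A)$. Conversely, if $[\LL,\LL] \cong K(\A)$ via a graded isomorphism $\phi$, then $\phi(V_{1,1}) \in [\LL,\LL]_0$ acts as the grading derivation on $[\LL,\LL]$; since $\LL_i = [\LL,\LL]_i$ for $i \neq 0$ and $[\phi(V_{1,1}), \zeta] = 0$ by degree considerations, the same element acts as the grading derivation on all of $\LL$. Consequently $\phi(V_{1,1})$ coincides with $\zeta$ up to a central element of $\LL$, forcing $\zeta \in [\LL,\LL]$ and $\G(\LL_1 \oplus \LL_{-1}) = [\LL,\LL] \cong K(\A)$.

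The main subtlety is the last identification $\phi(V_{1,1}) = \zeta$ in the converse direction: strictly, they agree only modulo the centre of $\LL$. For $\LL = \Lie(\GG)$ with $\GG$ adjoint simple over a field of characteristic $\neq 2,3$, the centre is typically trivial, which resolves the issue; alternatively, since the statement concerns the existence of $u,v$ with $\zeta = [u,v]$ rather than a specific choice of grading derivation, any central ambiguity can be absorbed by taking $u = \phi(1_+), v = \phi(1_-)$ and redefining the chosen $\zeta$ accordingly.
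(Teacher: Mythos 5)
Your overall route coincides with the paper's: the direction ``$\zeta=[u,v]\Rightarrow[\LL,\LL]\cong K(\A)$'' goes through Lemma~\ref{lem:4.3.3} and Lemma~\ref{lem:struct-kan}, and your explicit use of the perfectness of $K(\A)$ to pass from $\G(\LL_1\oplus\LL_{-1})$ to $[\G,\G]=[\LL,\LL]$ is exactly the point the paper makes when it writes $K(\A)\cong[\G,\G]\cong[\LL,\LL]$; that half, and your reduction in the first paragraph, are fine. The problem is in the other direction, where $[\LL,\LL]$ is assumed graded-isomorphic to $K(\A)$ via $\phi$ and you must produce $u,v$ with $[u,v]=\zeta$.

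There you only know that $\phi(V_{1,1})\in[\LL,\LL]_0$ acts as the grading derivation \emph{on $[\LL,\LL]$}: it acts by $i$ on $\LL_i=[\LL,\LL]_i$ for $i\neq 0$ and commutes with $[\LL,\LL]_0$. Adding $[\phi(V_{1,1}),\zeta]=0$ does not yield that it acts as the grading derivation on all of $\LL$, because nothing controls its bracket with the remaining part of $\LL_0$, which need not equal $[\LL,\LL]_0+k\zeta$; hence the assertion that $\phi(V_{1,1})-\zeta$ is central in $\LL$ is not justified as written. The two patches you offer do not close this: ``the centre is typically trivial'' is a hedge rather than an argument, and ``redefining the chosen $\zeta$'' is not available, since $\zeta$ is the specific element of $\LL_0$ fixed in the statement (if the centre were nontrivial, the validity of $\zeta=[u,v]$ could genuinely depend on which grading-derivation element one takes). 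The repair is the paper's one-line argument, which you have at hand since you already cite it: by \cite[Lemma 4.1.6]{BdMS} the map $\ad\colon\LL\to\Der([\LL,\LL])$ is an isomorphism; since $\zeta$ and $\phi(V_{1,1})$ induce the \emph{same} derivation of $[\LL,\LL]$ (its grading derivation), they are equal, so $\zeta=\phi(V_{1,1})=[\phi(1_+),\phi(1_-)]\in[\LL_1,\LL_{-1}]$. Equivalently: the difference annihilates $\LL_{\pm1}\oplus\LL_{\pm2}$, which generates the simple algebra $[\LL,\LL]$, hence centralizes $[\LL,\LL]$, and this centralizer is trivial by the same lemma --- which also shows $Z(\LL)=0$, so the word ``typically'' can simply be deleted once this is invoked.
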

\begin{proof}
Assume first that $\LL$ is graded-isomorphic to $K(\A)$. Let $1\in\LL_1$, $\hat 1\in\LL_{-1}$ denote
the images of $1\in\A$. Then $[1,\hat{1}]\in\LL_0$ acts as the grading derivation $\zeta$ on $[\LL,\LL]$. Since $\LL\cong\Der([\LL,\LL])$
by~\cite[Lemma 4.1.6]{BdMS}, $[1,\hat{1}]=\zeta$.

Conversely, assume that $\zeta=[u,v]$ for some $u\in\LL_{1}$, $v\in\LL_{-1}$.
Consider the Kantor pair $(\LL_{-1},\LL_1)$
with the triple product operation $\LL_\sigma\times \LL_{-\sigma}\times \LL_{\sigma}\to \LL_\sigma$
given by
\[
\lK x,y,z \rK=-[[x,y],z].
\]
By Lemma~\ref{lem:4.3.3} its standard 5-graded embedding $\G=\G(\LL_1,\LL_{-1})$
is canonically isomorphic to the graded subalgebra $[\LL,\LL]+k\zeta$ of $\LL$.
Then by Lemma~\ref{lem:struct-kan} the pair $(\LL_{-1},\LL_1)$ is associated to a structurable algebra $\A$. Then $K(\A)$
is graded-isomorphic to $[\G,\G]\cong [\LL,\LL]$.
\end{proof}

\section{Classification of $5$-gradings that correspond to structurable algebras}\label{sec:5-class}

\begin{definition}\label{def:rootsys}
Let $\GG$ be an algebraic group over a field $k$ and $\GT\subseteq\GG$ be a split $n$-dimensional
$k$\dash subtorus of $\GG$.
Let $X^*(\GT)\cong \ZZ^n$ be the group of characters of $\GT$, and let
\[
\Lie(\GG)=\bigoplus_{\alpha\in X^*(\GT)}\Lie(\GG)_\alpha
\]
be the $\ZZ^n$-grading on $\Lie(\GG)$ induced by the adjoint action of $\GT$. We call
\[
\Phi(\GT,\GG)=\{\alpha\in X^*(\GT) \mid \Lie(\GG)_\alpha\neq 0\}
\]
the set of \emph{roots of $\GG$ with respect to $\GT$}.
\end{definition}

If $\GG$ is a reductive algebraic group over $k$ and $\GT$ is a maximal split $k$\dash subtorus of $\GG$, then
$\Phi(\GT,\GG)\setminus \{0\}$ is a root system in the sense of Bourbaki~\cite{BorelTits,Bu}.
By abuse of language, we call $\Phi(\GT,\GG)$ a root system of $\GG$.

Let $\Phi$ be a root system and $\Pi\subseteq\Phi$ be a system of simple roots. For any $\alpha\in\Phi$
we write
\[
\alpha=\sum\limits_{\beta\in\Pi}m_\beta(\alpha)\beta,
\]
where the coefficients $m_\beta(\alpha)$ are all non-negative, or all non-positive. Once $\Pi$
is fixed, we denote the corresponding sets of positive and negative roots by $\Phi^\pm$.

Recall that two parabolic
$k$-subgroups $\GP_{\pm }$ of $\GG$ are called opposite, if $\GP_+\cap\GP_-$ is their common Levi subgroup.
If this is the case, then there is a maximal split $k$\dash subtorus $\GT$ of $\GG$ such that
\[
\GT\subseteq\GP_+\cap\GP_-,
\]
and a system of simple roots
$\Pi\subseteq\Phi$ and a non-empty subset $J\subseteq\Pi$ such that, if one defines
$\Psi=\Phi^+\cup\bigl(\Phi\cap\ZZ(\Pi\setminus J)\bigr)$, then
\begin{equation}\label{eq:lieP}
\begin{aligned}
&\Lie(\GP_+)=\bigoplus\limits_{\alpha\in\Psi}\Lie(G)_\alpha,\qquad
&\Lie(\GP_-)=\bigoplus\limits_{\alpha\in-\Psi}\Lie(G)_\alpha.\\
\end{aligned}
\end{equation}
Conversely, if $\GT$ is maximal split $k$\dash subtorus of $\GG$, $\Phi=\Phi(T,G)$, and
$\Psi\subseteq\Phi$ is a subset such that $0\in\Psi$, $\Psi$ is closed under the (partially defined)
addition of elements
of $\Phi$, and $\Psi\cup(-\Psi)=\Phi$, then there is a unique pair of opposite parabolic $k$-subgroups $\GP_{\pm }$ of $\GG$
satisfying~\eqref{eq:lieP}~\cite[Exp.\@~XXVI, Prop. 6.1]{SGA3}. In particular, every such
subset $\Psi$ has the form $\Psi=\Phi^+\cup\bigl(\Phi\cap\ZZ(\Pi\setminus J)\bigr)$ as above.

\begin{definition}
If $k$ is an algebraically closed field, then
the set $t(\GP_+)=\Pi\setminus J$ is called the \emph{type} of $\GP_+$; it is a system of simple roots of the
root system of the Levi subgroup $\GP_+\cap\GP_-$ of $\GP_+$. If $k$ is not algebraically closed, and $\bar k$ is
an algebraic closure of $k$, then we define $t(\GP_+)$ to be the type
of $(\GP_+)_{\bar k}$.
\end{definition}

A subset $\Psi\subseteq\Phi$ such that $0\in\Psi$, $\Psi$ is closed under addition, and $\Psi\cup(-\Psi)=\Phi$
arises, in particular, if we are given a non-trivial $\ZZ$-grading on $\Lie(G)$. Namely, one takes
$$
\Psi=\{\alpha\in\Phi(T,G)\ |\ \Lie(G)_\alpha\subseteq\bigoplus_{i\ge 0}\Lie(G)_i\}.
$$
Thus, to any such grading one may associate a pair of opposite parabolic subgroups.

In order to classify $5$-gradings that correspond to structurable algebras, we rely on the theory of nilpotent
orbits in Lie algebras of simple algebraic groups over an algebraically closed field. This theory
originates from the work of E. Dynkin~\cite{Dyn},
with further developements by B. Kostant,
G. E. Wall, R. W. Richardson, T.A. Springer, R. Steinberg, G. B. Elkington,
P. Bala and R. Carter, K. Pommerening,
and many others. As a result, the specific statements that we need are seriously scattered in the literature, and
we cite them according to more recent sources where they are stated in a more explicit form.

We recall the essence of Dynkin's classification of nilpotent elements in complex simple Lie algebras.
Let $\LL_{\CC}$ be a simple Lie algebra over $\CC$, and let
$e\in\LL_{\CC}$ be a nilpotent element. By the Jacobson--Morozov theorem,
$\LL_{\CC}$ contains an $sl_2$-triple of the form $\{e,h,f\}$. Let $\mathcal{H}\le\LL_{\CC}$ be a Cartan subalgebra of $\LL_{\CC}$ containing
$h$. Let $\Phi$ be the root system of $\LL_{\CC}$, and let $e_\alpha$, and $h_\alpha=[e_\alpha,e_{-\alpha}]$, $\alpha\in\Phi$,
be the standard root vectors in a Chevalley
basis of $\LL_{\CC}$ with respect to $\mathcal{H}$. There is a choice of a system of simple roots $\Pi\subseteq\Phi$ such that
for any $\alpha\in\Pi$ one has $\alpha(h)\ge 0$. Dynkin established that, morever, $\alpha(h)\in\{0,1,2\}$.
The Dynkin diagram of $\Phi$ with the integers $\alpha(h)$ associated to the nodes
corresponding to roots $\alpha\in\Pi$ is called a \emph{weighted Dynkin diagram} of $e$. It is uniquely
determined by $e$, and the weighted diagrams of two nilpotents $e,e'$ coincide if and only if
$e$ and $e'$ are conjugate by an inner automorphism of $\LL_{\CC}$~\cite[Theorems 8.1 and 8.3]{Dyn}.

Let $\LL_{\ZZ}$ be the $\ZZ$-Lie subalgebra of $\LL_{\CC}$ generated by all $e_\alpha$ and $h_\alpha$, $\alpha\in\Phi$.
Then $\LL_{\ZZ}$ is a $\ZZ$-form of $\LL_{\CC}$, i.e. $\LL_{\CC}=\LL_{\ZZ}\otimes_{\ZZ}\CC$.
Let $k$ be any algebraically closed field, and let $G^{sc}$ be a simply connected simple algebraic
group over $k$ of the same type $\Phi$ as $\LL_{\CC}$, and let $G^{ad}$ be the corresponding
adjoint group. Then $\Lie(G^{sc})\cong\LL_{\ZZ}\otimes_{\ZZ} k$, see
e.g.~\cite[Theorem 23.72]{Milne}.
Define the cocharacter $\lambda:\Gm\to G^{ad}\le \Aut(\Lie(G^{sc}))$ in such a way that
$\lambda(t)\cdot e_{\alpha}=t^{\alpha(h)}e_\alpha$
and $\lambda(t)\cdot h_\alpha=h_\alpha$ for any $\alpha\in\Pi\cup(-\Pi)$ and $t\in k^\times$.
Thus, we associate to any weighted Dynkin diagram a $k$-cocharacter $\lambda:\Gm\to G^{ad}$.

\begin{notation}
Let $G$ be a reductive algebraic group over a field $k$, and let $\lambda:\Gm\to G$ be a cocharacter of $G$ over $k$. Then
$\lambda$ induces a $\ZZ$-grading on $\Lie(G)$, and we denote by $\Lie(G)(\lambda,i)$ the $i$-th component of this grading,
i.e.
$$
\Lie(G)(\lambda,i)=\{ v\in\Lie(G)\ |\ \lambda(t)\cdot v=t^iv\mbox{ for any }t\in\Gm(k)=k^\times\}.
$$
We denote by $P(\lambda)$ and $P(-\lambda)$ the unique pair of (not necessarily proper) opposite
parabolic subgroups  in $G$
such that
$$
\Lie(P(\lambda))=\bigoplus_{i\ge 0}\Lie(G)(\lambda,i)\quad\mbox{and}\quad
\Lie(P(-\lambda))=\bigoplus_{i\ge 0}\Lie(G)(\lambda,-i).
$$
We denote by $C_G(\lambda)$ the centralizer
of $\lambda(\Gm)$ in $G$; this is a Levi subgroup $P(\lambda)\cap P(-\lambda)$ of $P(\lambda)$.
\end{notation}

The classification of weighted Dynkin diagrams corresponding to nilpotents involves the notion of a distinguished parabolic
subgroup. Note that the classification of types of distinguished parabolic subgroups, as defined below, is independent
of the characteristic of the base field.

\begin{definition}\cite[\S~2.6]{LieSe-conj}
Let $G$ be a semisimple algebraic group over an algebraically closed field $k$, and let $\Phi=\Phi(T,G)$ be the root system of $G$.
A parabolic subgroup $P$ of $G$ is called \emph{distinguished} if
\begin{equation}\label{eq:dist}
\dim L_P=\dim \hspace{-2ex}\bigoplus\limits_{\substack{\alpha\in\Phi \colon \\[.6ex] \sum\limits_{\beta\in J}m_\beta(\alpha)=1}}
\hspace*{-2ex}\Lie(G)_\alpha,
\end{equation}
where $L_P$ is a Levi subgroup of $P$, and $\Pi\setminus J$
is the type of $P$ in a system of simple roots $\Pi$ of $\Phi$.
\end{definition}

\begin{theorem}~\cite{Premet03,LieSe-conj}\label{thm:nilp-orbits}
Let $G$ be an adjoint simple algebraic group over an algebraically closed field $k$ of type $\Phi$ such that
$\Char(k)\neq 2$ if $\Phi=B_l,C_l$ ($l\ge 2$) or $\Phi=D_l$ ($l\ge 4$),
and $\Char(k)\neq 2,3$ if $\Phi=E_6,E_7,E_8,G_2,F_4$.
\begin{enumerate}
\item Let $\lambda:\Gm\to G$ be a $k$-cocharacter of $G$ corresponding to a weighted Dynkin diagram of a nilpotent element
in a complex simple Lie algebra of the same type as $G$.
Then
\begin{enumerate}[(i)]
\item  $C_G(\lambda)$ has a unique dense open orbit $V$ in $\Lie(G)(\lambda,2)$.

\item  For any $e\in V(k)$, $C_G(e)\le P(\lambda)$.

\item  For any $e\in V(k)$, let $C(\lambda,e)=C_G(\lambda)\cap C_G(e)$. Then $C(\lambda,e)^\circ$ is a reductive subgroup of $G$, and
for any maximal torus $S$ of $C(\lambda,e)$ the parabolic subgroup $Q=P(\lambda)\cap H$ is a distinguished
parabolic subgroup of the semisimple group $H=[C_G(S),C_G(S)]$. Moreover, $\lambda(\Gm)\le H$,
and $e$ lies in the dense open orbit of $C_H(\lambda)$ in $\Lie(H)(\lambda,2)$.
\end{enumerate}
\item  Conversely, for any nilpotent element $e\in\Lie(G)$ there is a cocharacter $\lambda:\Gm\to G$ as above, such that
$e$ belongs to the unique dense open orbit of $C_G(\lambda)$ in $\Lie(G)(\lambda,2)$.
\end{enumerate}
\end{theorem}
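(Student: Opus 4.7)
The approach is to combine the Bala--Carter classification of nilpotent orbits in characteristic $0$ with Premet's theorem on associated cocharacters for passing to positive characteristic. First, I would handle the characteristic $0$ case by invoking Jacobson--Morozov: extend $e$ to an $sl_2$-triple $\{e,h,f\}$ and let $\lambda$ be the cocharacter with $\lambda(t)\cdot e_\alpha = t^{\alpha(h)}e_\alpha$. Standard $sl_2$-representation theory applied to $\ad(h)$ yields surjectivity of $\ad(e)\colon\Lie(G)(\lambda,i)\to\Lie(G)(\lambda,i+2)$ for $i\ge 0$, and this gives (1)(i): the $C_G(\lambda)$-orbit of $e$ in $\Lie(G)(\lambda,2)$ is open. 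The same theory forces $\ad(h)$ to have non-negative eigenvalues on $C_{\Lie(G)}(e)$, which is exactly (1)(ii), while (2) in characteristic $0$ is Jacobson--Morozov combined with Dynkin's observation that $h$ may be chosen so that $\alpha(h)\in\{0,1,2\}$ for every simple root~$\alpha$.

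To extend to positive characteristic under the stated restrictions on $p$, I would replace Jacobson--Morozov by Premet's theorem on \emph{associated cocharacters}: every nilpotent $e\in\Lie(G)$ admits a cocharacter $\lambda\colon\Gm\to G$ with $e\in\Lie(G)(\lambda,2)$ whose image is contained in the derived subgroup of a Levi subgroup of $C_G(e)$, and such $\lambda$ is unique up to $C_G(e)^\circ$-conjugacy. The characteristic hypothesis ensures both the existence of $\lambda$ and that the resulting weighted diagrams coincide with the characteristic $0$ ones. Then (1)(i) and (1)(ii) in positive characteristic use that $\lambda$ is \emph{optimal} for $e$ in the sense of Kempf--Rousseau instability theory: optimality gives $C_G(e)\le P(\lambda)$ essentially by definition of $P(\lambda)$, and density of the orbit follows from a dimension count combined with the surjectivity of $\ad(e)$ on high weight spaces, which survives to positive characteristic because the relevant $sl_2$-modules are restricted in the good-characteristic range.

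For part (1)(iii), the Bala--Carter strategy reduces an arbitrary nilpotent to a \emph{distinguished} one in a Levi subalgebra. Given $e$ in the open $C_G(\lambda)$-orbit in $\Lie(G)(\lambda,2)$, Premet's theorem additionally asserts that $C(\lambda,e)^\circ$ is reductive. Choose a maximal torus $S$ of $C(\lambda,e)$: then $S$ centralizes both $\lambda(\Gm)$ and $e$, so both sit inside $H=[C_G(S),C_G(S)]$. By maximality of $S$, no nontrivial torus of $H$ centralizes $e$ modulo the centre, which is the definition of $e$ being distinguished in $\Lie(H)$. In this distinguished situation, (1)(i) applied inside $H$ together with $C_H(\lambda,e)^\circ$ being trivial forces $\dim\Lie(H)(\lambda,2)=\dim\Lie(H)(\lambda,0)=\dim L_Q$, which is precisely the dimension identity~\eqref{eq:dist} defining a distinguished parabolic.

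The main obstacle is the passage from characteristic $0$ to positive characteristic: Jacobson--Morozov is unavailable, so the entire $sl_2$-based architecture must be replaced by Kempf--Rousseau instability theory and Premet's analysis of optimal cocharacters. The specific characteristic restrictions in the theorem are exactly those guaranteeing that $p$ is ``good'' for $G$, which is needed both for the existence and conjugacy-uniqueness of associated cocharacters and for the reductivity of $C(\lambda,e)^\circ$. Without these, the Bala--Carter reduction to distinguished parabolics breaks down and extra ``exceptional'' nilpotent orbits with no characteristic $0$ analogue may appear.
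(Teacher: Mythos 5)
Your good\dash characteristic argument follows essentially the same route as the paper does for most types: there the paper also reduces to Premet's theory of associated/optimal cocharacters (after passing to a group $\tilde G$ with a nondegenerate invariant trace form), citing his Theorem 2.3 for the dense orbit, for $C_G(e)\le P(\lambda)$ and for reductivity of $C(\lambda,e)^\circ$, and his Proposition 2.5 for the Bala--Carter data in part (1)(iii); part (2) in that range is likewise Premet plus the field\dash independence of the weighted diagrams. Up to this point your sketch is sound.

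However, there is a genuine gap. You assert that the characteristic restrictions in the statement are \emph{exactly} those making $p$ good for $G$, and your whole architecture (existence and $C_G(e)^\circ$\dash conjugacy of associated cocharacters, Kempf--Rousseau optimality giving $C_G(e)\le P(\lambda)$, reductivity of $C(\lambda,e)^\circ$, restrictedness of the relevant modules) is run under that assumption. This is false for $\Phi=E_8$: its bad primes are $2,3,5$, while the theorem only excludes $2$ and $3$, so the case $\Phi=E_8$, $\Char(k)=5$ is permitted --- and it is precisely the case the paper cannot do without, since the paper's purpose is the classification in characteristic $5$. Premet's results do not apply there, and your proposal offers no substitute. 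The paper handles this case by a separate argument drawn from the case\dash by\dash case analysis of Liebeck and Seitz: their Theorem 15.1 together with Table 22.1.1 gives, for each weighted diagram, a nilpotent $e\in\Lie(G)(\lambda,2)$ whose $P(\lambda)$\dash orbit is dense with $C_G(e)\le P(\lambda)$ (whence the unique dense open $C_G(\lambda)$\dash orbit); their Theorem 1(c) gives $C_G(e)=C(\lambda,e)R_u(C_G(e))$ with $C(\lambda,e)^\circ$ reductive; and their Lemmas 2.2, 2.13 and 15.3(i), combined with an explicit conjugacy argument identifying a maximal torus $S$ of $C(\lambda,e)$ with the torus arising in their construction, yield the distinguished parabolic statement in (1)(iii) and the dense orbit in $\Lie(H)(\lambda,2)$. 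The same sources (Theorem 1(c), Theorem 3.1 and the tables, valid for $\Char(k)\neq 2,3$) are what justify part (2), i.e.\ that every nilpotent class in characteristic $5$ still comes from a characteristic\dash $0$ weighted diagram; your appeal to Premet does not cover this for $E_8$ either. Without an argument of this kind for the bad prime $5$ in type $E_8$, the theorem as stated is not proved.
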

\begin{proof}
(1) Set $\LL=\Lie(G)$, $P=P(\lambda)$, $L_P=C_G(\lambda)$, and $L_Q=C_H(\lambda)$ for short. Clearly, $L_P$ is a Levi subgroup of $P$ and
$L_Q$ is a Levi subgroup of $Q$.

Assume first that $G$ is not of type $E_8$ if $\Char(k)=5$.
Then the characteristic of $k$ is good for $G$.
There is reductive group $\tilde G$ over $k$ such that $[\tilde G,\tilde G]$ is the simply connected group isogenous to $G$,
and $\Lie(\tilde G)$ admits a non-degenerate $\tilde G$-invariant trace form, see~\cite[2.3]{Premet03}.
To prove our theorem, clearly, we can replace $G$ by $\tilde G$; this makes other results of Premet applicable.
By the discussion before~\cite[Theorem 2.3]{Premet03} $L_P$ has a dense open orbit $V$ in
$\LL(\lambda,2)$, and  for any $e\in V(k)$, $C_G(e)\le P$.
By~\cite[Theorem 2.3 (iii)]{Premet03} $C(\lambda,e)$ is a reductive subgroup of $G$.
By~\cite[Theorem 2.3 (ii)]{Premet03} $C_G(e)\le P$.
The remaining statements of (iii) follow from~\cite[Proposition 2.5]{Premet03}.

Assume that $G$ is of type $E_8$ and $\Char(k)\neq 2,3$. Consider the table~\cite[Table 22.1.1]{LieSe-conj}.
By~\cite[Theorem 15.1]{LieSe-conj} the weighted diagrams of complex nilpotent elements are exactly the ones in
the 2nd column of this table, and, conversely, for any $\lambda$ corresponding to such a diagram, and any field $k$ as above,
there is a nilpotent element $e\in\LL(\lambda,2)$ such that $e^P$ is dense in $\bigoplus_{i\ge 2}\LL(\lambda,i)$
and $C_G(e)\le P$. This implies that $e^{L_P}=V$ is a dense open orbit of $L_P$ in $\LL(\lambda,2)$. Since any two such orbits
would intersect, this orbit is unique.
Clearly, it is enough to establish (iii) for this particular element $e\in V(k)$.

By~\cite[Theorem 1 (c)]{LieSe-conj} $C_G(e)=C(\lambda,e)R_u(C_G(e))$, where
$R_u(C_G(e))$ is the unipotent radical of $C_G(e)$ and
 $C(\lambda,e)^\circ$ is a reductive group (note that, contrary to our conventions, in~\cite{LieSe-conj}
reductive groups are not required to be connected).
Let $S$ be a maximal torus of $C(\lambda,e)$, then
$C_G(S)$ is a Levi subgroup of a parabolic subgroup of $G$ by~\cite[Lemma 2.2]{LieSe-conj}. Set $H=[C_G(S),C_G(S)]$.
Clearly, $e\in \LL(\lambda,2)\cap\Lie(H)$, since $e$ is nilpotent and belongs to $\Lie(C_G(S))$.
By the proof of~\cite[Lemma 2.13]{LieSe-conj}
$e$ is a distinguished element of $H=[C_G(S),C_G(S)]$,
i.e. $C_{H}(e)^\circ$ is a unipotent group. Since $S\le C_G(e)$, this implies that $S=\Cent(C_G(S))^\circ$.
On the other hand, by the actual statement of~\cite[Lemma 2.13]{LieSe-conj},
$C_G(S)$ is conjugate to the Levi subgroup $\bar L$ of a parabolic subgroup of $G$ used in the
original construction of the 1-dimensional torus $\lambda(\Gm)=T$ given in~\cite[Lemma 15.3 (i)]{LieSe-conj}.
Let $\bar S=\Cent(\bar L)^\circ$, then $\bar S\le C_G(e)$.
Since $S$ and $\bar S$ are conjugate in $G$, they have the same dimension, and hence they are both maximal
tori in $C_G(e)$. Moreover, they are both contained in $C(\lambda,e)^\circ\le C_G(e)^\circ$,
hence they are conjugate in this group,
i.e. by an element centralizing $T$. Then the remaining statements of our claim (iii) follow from the corresponding
properties of $T$ with respect to $\bar L$ stated in~\cite[Lemma 15.3 (i)]{LieSe-conj}.

(2) Let $e\in\Lie(G)$ be any nilpotent.
The classification of nilpotent classes~\cite[Theorem 1 c)]{LieSe-conj} implies that
there is a cocharacter $\lambda:\Gm\to G$, that corresponds
to a weighted Dynkin diagram, and such that $e$ belongs to the dense open orbit of $C_G(\lambda)$ in $\Lie(G)(\lambda,2)$.
Moreover, the explicit classification
of occurring weighted Dynkin diagrams~\cite[Theorem 3.1; Tables 22.1.1-22.1.5]{LieSe-conj} is independent
of the ground field under the assumption $\Char k\neq 2,3$, hence any such weighted
Dynkin diagram is a diagram of a complex nilpotent.
\end{proof}

\begin{lemma}\label{lem:H-2}
In the setting of Theorem~\ref{thm:nilp-orbits} (1) (iii), assume moreover that $H$ is not of type $E_8$ if $\Char(k)=5$.
Then
$[\Lie(H)(\lambda,-2),\, e]=\Lie(H)(\lambda,0)$.
\end{lemma}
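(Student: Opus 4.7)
The plan is to show that $\ad(e)\colon \Lie(H)(\lambda,-2)\to \Lie(H)(\lambda,0)$ is both injective and a map between equidimensional spaces, which forces it to be an isomorphism. The starting observation is that since $Q = P(\lambda)\cap H$ is the distinguished parabolic of $H$ whose Richardson orbit contains $e$, the element $e$ is a distinguished nilpotent in $H$. A distinguished nilpotent has a weighted Dynkin diagram with labels in $\{0,2\}$, so the $\lambda$-grading on $\Lie(H)$ is concentrated in even weights, and in particular $\Lie(H)(\lambda,i)=0$ for all odd~$i$.

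Next, I would carry out the dimension count. Fix a system of simple roots $\Pi_H$ of $H$, and let $J\subseteq\Pi_H$ be the set of simple roots labeled $2$ by $\lambda$. Since $\lambda$ has label $2$ on roots in $J$ and $0$ on the rest, for any root $\alpha$ of $H$ the condition $\sum_{\beta\in J}m_\beta(\alpha)=1$ holds precisely when $\alpha$ has $\lambda$-weight $2$. Applying the distinguished parabolic identity \eqref{eq:dist} to $Q$ inside $H$ therefore gives $\dim L_Q = \dim\Lie(H)(\lambda,2)$. Combined with the equality $\Lie(L_Q)=\Lie(H)(\lambda,0)$ and the symmetry $\dim\Lie(H)(\lambda,-2)=\dim\Lie(H)(\lambda,2)$ coming from $\alpha\mapsto -\alpha$, this yields
\[
\dim\Lie(H)(\lambda,-2)=\dim\Lie(H)(\lambda,0)=\dim\Lie(H)(\lambda,2).
\]

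Finally, I would invoke $\mathfrak{sl}_2$-theory. The lemma's hypothesis, combined with the ambient assumption on $\cha(k)$ in Theorem~\ref{thm:nilp-orbits}, ensures that $\cha(k)$ is good for every simple factor of $H$. The Jacobson--Morozov theorem then yields an $\mathfrak{sl}_2$-triple $(e,h,f)$ in $\Lie(H)$ with $h=d\lambda(1)$, and the standard $\mathfrak{sl}_2$-representation theory in good characteristic shows that $(\ad e)^2$ induces an isomorphism $\Lie(H)(\lambda,-2)\xrightarrow{\sim}\Lie(H)(\lambda,2)$. In particular $\ad(e)\colon\Lie(H)(\lambda,-2)\to\Lie(H)(\lambda,0)$ is injective, and by the dimension equality above this injection is an isomorphism, establishing the lemma. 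The main obstacle is precisely this last step: ensuring that Jacobson--Morozov and the $\mathfrak{sl}_2$-theoretic conclusion hold in our characteristic. This is exactly what motivates the exclusion of $H=E_8$ in characteristic $5$, since it is the only remaining case in which $\cha(k)$ fails to be good for $H$.
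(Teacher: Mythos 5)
Your first half—the dimension count—is essentially the paper's argument: since $Q=P(\lambda)\cap H$ is distinguished, the labels of $\lambda$ on the simple roots in $J$ are all $2$ (the paper cites \cite[Lemma 10.3]{LieSe-conj} for this), so the roots of $\lambda$-weight $2$ are exactly those with $\sum_{\beta\in J}m_\beta(\alpha)=1$, and \eqref{eq:dist} together with the symmetry $\alpha\mapsto-\alpha$ gives $\dim\Lie(H)(\lambda,-2)=\dim\Lie(H)(\lambda,0)=\dim\Lie(H)(\lambda,2)$. Up to here you and the paper agree, and all that remains is injectivity of $\ad(e)$ on $\Lie(H)(\lambda,-2)$.

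That last step is where your proposal has a genuine gap. You derive injectivity from Jacobson--Morozov plus ``standard $\mathfrak{sl}_2$-representation theory in good characteristic,'' but neither ingredient is available in the form you use it. The $\lambda$-grading on $\Lie(H)$ for a distinguished nilpotent has weights far exceeding $p$: for instance, if $e$ is regular in a factor of $H$ of type $E_7$, the weights go up to $2h-2=34$, while $p$ may be $5$ or $7$. In that range $\Lie(H)$ is not a completely reducible module over an $\mathfrak{sl}_2$-triple, the characteristic-zero weight-reflection isomorphisms $(\ad e)^{j}\colon\Lie(H)(\lambda,-j)\to\Lie(H)(\lambda,j)$ are not a formal consequence of $\mathfrak{sl}_2$-theory, and even the existence of a triple $(e,h,f)$ with $h=d\lambda(1)$ is not the classical Jacobson--Morozov theorem (which needs $p$ large relative to the Coxeter number); in good but small characteristic these facts are nontrivial theorems, not routine representation theory. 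The statement you need is exactly \cite[Theorem 2.3 (iv)]{Premet03}, proved via Kempf--Rousseau/optimal-cocharacter methods, and this is what the paper cites: injectivity of $\ad(e)$ on the negative weight spaces of an associated cocharacter holds whenever the characteristic is good for $H$. The exclusion of $H$ of type $E_8$ in characteristic $5$ is there precisely to keep the characteristic good so that Premet's theorem applies; it does not make the naive $\mathfrak{sl}_2$-argument valid. So your outline is repaired simply by replacing the Jacobson--Morozov/$\mathfrak{sl}_2$ step with the citation of Premet's result, which is the paper's proof.
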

\begin{proof}
Let $\Psi$ be the root system of $H$, let $\Sigma$ be a system of simple roots of $\Psi$,
and let $J\subset\Sigma$ be the set of simple
roots corresponding to the parabolic subgroup $Q=P(\lambda)\cap H$ of $H$.
Since $Q$ is distinguished, one has $\lambda(\alpha)=2$ for all $\alpha\in J$,
see~\cite[Lemma 10.3]{LieSe-conj}. Hence one has
$$
\dim\bigl(\Lie(H)(\lambda,2)\bigr)=\dim\bigl(\Lie(H)(\lambda,-2)\bigr)=\dim\bigl(\Lie(H)(\lambda,0)\bigr).
$$
Hence it remains to prove that
$[u,e]\neq 0$ for any $0\neq u\in\Lie(H)(\lambda,-2)$. If $(\Psi,\Char(k))\neq (E_8,5)$, then this follows from~\cite[Theorem 2.3 (iv)]{Premet03}.
\end{proof}

From now and until the end of this section, assume that $k$ is a field of characteristic $\neq 2,3$, and $G$ is an adjoint simple algebraic group over $k$.
Denote by $\bar k$ an algebraic closure of $k$.
Let $\LL=\Lie(G)$ be the Lie algebra of $G$, and let $\LL=\bigoplus\limits_{i\in\ZZ}\LL_i$ be any $\ZZ$-grading on $\LL$
such that $\LL\neq\LL_0$. By~\cite[Lemma 4.1.6]{BdMS} one has
$G=\Aut(\Lie(G))^\circ$, hence there is a unique
closed embedding of a 1-dimensional split
$k$\dash torus $\lambda:\Gm\to G$,
such that $\LL_i=\Lie(G)(\lambda,i)$ for all $i\in\ZZ$.

\begin{theorem}\label{thm:A-e}
Let $\LL=\bigoplus\limits_{i=-2}^2\LL_i$ be a $5$-grading on the Lie algebra $\LL=\Lie(G)$ of $G$ such that
$\LL_1\oplus\LL_{-1}\neq 0$, and let $\lambda:\Gm\to G$
be the corresponding cocharacter of $G$. Let $\Delta$ be the weighted Dynkin diagram of the same root system type as $G_{\bar k}$,
such that simple roots in $t(P(\lambda))$
have weight 0, and roots in $\Pi\setminus t(P(\lambda))$ have weight 2.
Then $[\LL,\LL]$ is graded-isomorphic to $K(\A)$ for a structurable algebra $\A$ over
$k$ if and only if $\Delta$ is a weighted Dynkin diagram
of a nilpotent element
in a complex simple Lie algebra of the same type.
\end{theorem}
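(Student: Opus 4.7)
The plan is to invoke Lemma~\ref{lem:A-zeta}, which reduces the statement to the criterion: $[\LL,\LL]\cong K(\A)$ for some structurable algebra $\A$ over $k$ if and only if $\zeta=[u,v]$ for some $u\in\LL_1$, $v\in\LL_{-1}$. Such a pair produces an $sl_2$-triple $(u,2\zeta,2v)$ whose semisimple member $2\zeta$ encodes precisely the grading. Hence the question is whether $G$ admits a nilpotent element in $\LL_1$ whose associated characteristic element is $2\zeta$, which is exactly the situation governed by Theorem~\ref{thm:nilp-orbits}. I connect the two directions through that classification together with Lemma~\ref{lem:H-2}.

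\emph{Direction $(\Leftarrow)$.} Given $u, v$ as above, a short computation (using $u\in\LL_1$, $v\in\LL_{-1}$) confirms that $(u,2\zeta,2v)$ satisfies the $sl_2$-relations. The system $\Pi$ is dominant for $d\lambda$ by construction of $P(\lambda)$, so $\alpha(2\zeta)=\Delta(\alpha)\in\{0,2\}$ for every $\alpha\in\Pi$; hence $\Delta$ is the weighted Dynkin diagram of the nilpotent $u$ in $\LL$. Theorem~\ref{thm:nilp-orbits}~(2) then forces $\Delta$ to coincide with the weighted Dynkin diagram of some nilpotent in the complex simple Lie algebra of the same type as $G$.

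\emph{Direction $(\Rightarrow)$.} I set $\mu:=2\lambda$, so that $\mu$ realizes $\Delta$ and $\Lie(G)(\mu,2)=\LL_1$. By Theorem~\ref{thm:nilp-orbits}~(1), $C_G(\mu)=C_G(\lambda)$ has a dense open orbit $V$ in $\LL_1$; pick $e\in V(k)$ (which exists when $k$ is algebraically closed, the natural setting of Theorem~\ref{thm:nilp-orbits}). Part~(iii) supplies a maximal torus $S\le C(\mu,e)$ and a reductive subgroup $H=[C_G(S),C_G(S)]$ with $\mu(\Gm)\le H$ and $e$ in the dense open orbit of $C_H(\mu)$ in $\Lie(H)(\mu,2)$. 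In particular $d\mu=2\zeta\in\Lie(H)(\mu,0)$, so Lemma~\ref{lem:H-2} produces $w\in\Lie(H)(\mu,-2)\subseteq\LL_{-1}$ with $[w,e]=2\zeta$; then $v:=-w/2$ satisfies $[e,v]=\zeta$, and Lemma~\ref{lem:A-zeta} delivers the structurable algebra $\A$.

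The main obstacle is the exceptional hypothesis of Lemma~\ref{lem:H-2}, which excludes the case $H$ of type $E_8$ in characteristic~$5$; this arises precisely when $S$ is trivial, i.e.\ when $e$ is a distinguished nilpotent in $G=E_8$. To handle it I would bypass the lemma by establishing the required surjectivity $\zeta\in[\LL_{-1},e]$ through a direct analysis of distinguished orbits in $E_8$ at characteristic~$5$, or by invoking a refined $sl_2$-triple theorem valid in that setting. A minor secondary issue is the descent from $\bar{k}$ to a non-algebraically-closed $k$, but the natural scope of this section is the algebraically-closed case, so this does not affect the main applications.
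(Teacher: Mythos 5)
Your first direction (from the existence of $u,v$ with $\zeta=[u,v]$, i.e.\ from $[\LL,\LL]\cong K(\A)$, to the condition on $\Delta$) contains the essential gap. The $sl_2$-triple $(u,2\zeta,2v)$ does exist, but the step ``hence $\Delta$ is the weighted Dynkin diagram of the nilpotent $u$'' is precisely what must be proved, and nothing you invoke supplies it in characteristic $5$. Theorem~\ref{thm:nilp-orbits}(2) only attaches to $u$ \emph{some} cocharacter $\lambda'$ whose diagram is of complex type and whose dense $C_G(\lambda')$-orbit in $\Lie(G)(\lambda',2)$ contains $u$; it does not say that the semisimple member of an arbitrary $sl_2$-triple through $u$ (here $2\zeta=d(2\lambda)$) yields the same diagram. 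Kostant's uniqueness of weighted Dynkin diagrams via $sl_2$-triples is a characteristic-zero statement, and $5$ is a bad prime for $E_8$, so it cannot be quoted here. The identification $2\zeta=\zeta'$ is the substantive content of the paper's argument for this direction: it applies Lemma~\ref{lem:H-2} to the subgroup $H$ attached to $(\lambda',e)$ to write $\zeta'=[e,f']$, conjugates $\lambda'$ inside $N_G(k\cdot e)$ so that $\lambda'(\Gm)$ and $\lambda(\Gm)$ commute (so $f'$ may be taken in $\LL_{-1}$), and then uses the special property of the unit of the structurable algebra, $V_{1,x}1=2\bar x-x\neq 0$ for $x\neq 0$, to conclude from $[[e,2f-f'],e]=0$ that $2f-f'=0$, hence $2\zeta=\zeta'$; moreover it must separately exclude the case where $u$ is a distinguished nilpotent of $E_8$ in characteristic $5$ (where Lemma~\ref{lem:H-2} is unavailable), which the paper does by exhibiting a distinguished representative $e'$ with $\ad(e')^5\neq 0$, contradicting $\ad(u)^5=0$ forced by the $5$-grading. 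None of these steps appear in your proposal, so this direction is not established.

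Your other direction (from the diagram condition to the structurable algebra) does follow the paper's route: pass to $2\lambda$, take $e$ in the dense $C_G(\lambda)$-orbit of $\LL_1$, use Lemma~\ref{lem:H-2} to hit $2\zeta$, and conclude via Lemma~\ref{lem:A-zeta}. But the ``main obstacle'' you leave open -- the hypothesis of Lemma~\ref{lem:H-2} excluding $H$ of type $E_8$ in characteristic $5$ -- does not need a new analysis of distinguished $E_8$ orbits: if $H$ had type $E_8$, then $Q=P(2\lambda)\cap H$ would be a distinguished parabolic subgroup of $E_8$, and the associated grading of $\Lie(H)$ would have nonzero components in degrees far beyond $4$, whereas the $2\lambda$-grading of $\Lie(G)$ (hence of $\Lie(H)$) is concentrated in degrees $|i|\le 4$; so this case simply cannot occur here. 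Finally, the theorem is stated over an arbitrary field of characteristic $\neq 2,3$, so the existence of a $k$-point $e\in V(k)$ (openness in an affine space for $k$ infinite, and the homogeneous-space argument of~\cite[2.7]{SpSt-conj} for $k$ finite) is part of the proof rather than outside its scope, and the paper carries out this descent explicitly.
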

\begin{proof}
Consider the cocharacter $2\lambda:\Gm\to G$, so that $\LL_i=\Lie(G)(2\lambda,2i)$, $i\in\ZZ$. Then $\Delta$
is the weighted Dynkin diagram corresponding to $2\lambda$ over $\bar k$.

Assume the condition on $\Delta$ is satisfied. We show that there are $u\in\LL_{-1}$, $e\in\LL_1$ such that
$[u,e]=\zeta$, the grading derivation of $\LL$; then $[\LL,\LL]$ is graded-isomorphic to $K(\A)$ by Lemma~\ref{lem:A-zeta}.
Let $1+\epsilon\in\Gm(k[\epsilon])$ be the unit element of $\Lie(\Gm)(k)\cong k$. Then
$2\zeta=2\lambda(1+\epsilon)\in\Lie(G)$
is the grading derivation of $\Lie(G)$ with respect to $2\lambda$.
We have
$$
2\zeta\in\Lie(G)(2\lambda,0)\cap\Lie(2\lambda(\Gm))\subseteq\Lie(H)(2\lambda,0),
$$
where $H$ is as in Theorem~\ref{thm:nilp-orbits}.
By Lemma~\ref{lem:H-2} there is a dense open orbit $V\subseteq\LL_1=\Lie(G)(2\lambda,2)$ of $L_P$, such that for any $e\in V(k)$ one has
$[\Lie(H)(2\lambda,-2),\, e]=\Lie(H)(2\lambda,0)$.
In particular, $\zeta\in [\Lie(G)(2\lambda,-2),e]=[\LL_{-1},e]$.

Now let $k$ be not necessarily algebraically closed, and let $\bar k$ be its algebraic closure. Note that $P(\pm\lambda)$ and $C_G(\lambda)=P(\lambda)\cap P(-\lambda)$
are defined over $k$. Since $C_G(\lambda)\times_k \bar k$ has a unique dense open orbit $V$ in $\LL_1\otimes_k \bar k$,
the open subvariety $V$ of the affine space $\LL_1$ is defined over $k$ (although the action of $C_G(k)$ on it does not have to be transitive).
If $k$ is infinite, then $V(k)\neq\emptyset$ just because $V$ is
an open subvariety of an affine space. If $k$ is finite, then $V(k)\neq\emptyset$ by~\cite[2.7]{SpSt-conj}, since
$V$ is a homogeneous space for $C_G(\lambda)$.
Thus, there is a $k$-point $e\in V(k)$. Then $\zeta\in [\LL_{-1},e]$,
since the same holds over $\bar k$.

Next, assume that $[\LL,\LL]$ is graded-isomorphic to $K(\A)$, and show that $\Delta$ is a weighted Dynkin diagram
of a nilpotent element
in a complex simple Lie algebra of type $\Phi$. We can assume that $k$
is algebraically closed without loss of generality.
Let $e=1_+\in\LL_1$ and $f=1_-\in\LL_{-1}$ be the elements representing the unit of the structurable algebra $\A$. By the very definition of $K(\A)$,
we conclude that $[e,f]=\zeta\in\LL_0$. Furthermore, for any $0\neq x\in\A$ one has $V_{1,x}(1)=2\bar x-x\neq 0$,
since $2\bar x=x$ implies $2x=\bar x$ and $3\bar x=0$, whence $x=0$.

By Theorem~\ref{thm:nilp-orbits} there is a cocharacter $\lambda':\Gm\to G$ that corresponds
to a weighted Dynkin diagram of complex nilpotent,
and such that $e$ belongs to the dense open orbit of $C_G(\lambda')$ in $\Lie(G)(\lambda',2)$.
Let $\zeta'$ be the grading derivation of $\LL$ corresponding to $\lambda'$. Then $[\zeta',e]=2e$.
Assume for the moment that the subgroup $H$ corresponding to $\lambda'$ and $e$ is not of type $E_8$ if $\Char k=5$.
Then by Lemma~\ref{lem:H-2} there is $f'\in\Lie(G)(\lambda',-2)$ such that $[e,f']=\zeta'$. We have
$\lambda(\Gm),\lambda'(\Gm)\le N_G(k\cdot e)$. Hence after conjugating $\lambda'(\Gm)$ by an element of
$C_G(e)(k)\le N_G(k\cdot e)(k)$
we can assume that $\lambda'(\Gm)$ and $\lambda(\Gm)$ lie in the same maximal torus of $N_G(k\cdot e)$, and thus
centralize each other. In particular, $\zeta'\in\LL_0$, and hence without loss of generality $f'\in\LL_{-1}$.
Then $2\zeta-\zeta'=[e,2f-f']$ and $[2\zeta-\zeta',e]=0$. In other words, $[[e,2f-f'],e]=0$. However, $[e,2f-f']$ acts
on $\LL_1$ as $V_{1,2f-f'}$ acts on $\A$, whence $2f-f'=0$ by the above computation. Thus $2\zeta=\zeta'$, and we are done.

Assume that $H=G$ has type $E_8$; then $P(\lambda')$ is a distinguished parabolic subgroup
of $G$. We claim that this case cannot occur in our setting. The
types of distinguished parabolic subgroups are listed in~\cite[Table 13.2]{LieSe-conj}. Denote by
$\LL=\bigoplus_{i\in\ZZ}\LL_{[i]}$ the grading on $\LL$ induced by $\lambda'$.
In all cases,
one readily sees that $\LL_{[2i]}\neq 0$ for all $1\le i\le k$, where $k\ge 5$. Then
by~\cite[proof of Propositions 13.4 and 13.5]{LieSe-conj}, under the assumption $\Char k\neq 2,3$ there is
a nilpotent $e'\in\LL_{[2]}$ such that ${e'}$ lies in the dense orbit of $C_G(\lambda')$ in $\LL_{[2]}$,
and one has $\bigl(\ad(e')|_{\bigoplus_{i\ge 0}\LL_{[i]}}\bigr)^5\neq 0$. Clearly, $e$ and $e'$ are $C_G(\lambda')$-conjugate.
However, since $e\in\LL_1$, and $\LL$ is $5$-graded, one has $\ad(e)^5=0$, hence $\ad(e')^5=0$ as well,
a contradiction.
\end{proof}

\begin{lemma}\label{lem:5-lambda}
Let $\LL=\bigoplus_{i=-2}^2\LL_i$ be a $5$-grading on the Lie algebra $\LL=\Lie(G)$ of $G$ such that
$\LL_1\oplus\LL_{-1}\neq 0$, and let $\lambda:\Gm\to G$
be the corresponding cocharacter of $G$. Let $\tilde\alpha$ be the root of maximal height in $\Phi$
with respect to $\Pi$. Let $J=\Pi\setminus t(P(\lambda))$, then $J$ satisfies
(a) $J=\{\alpha_1\}$ or (b) $J=\{\alpha_1,\alpha_2\}$.
If (a) holds, then $m_{\alpha_1}(\tilde\alpha)=1$ or $2$. If (b) holds, then
$m_{\alpha_1}(\tilde\alpha)=m_{\alpha_2}(\tilde\alpha)=1$. In both cases
$\lambda(t)\cdot e_\alpha=t e_\alpha$  for any $\alpha\in J$ and $t\in k^\times$.
\end{lemma}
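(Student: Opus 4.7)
The plan is to translate the $5$-gradedness of $\LL$ into arithmetic constraints on the positive integer weights that $\lambda$ assigns to simple-root vectors, and then combine these with the well-known positivity of all the coefficients $m_\beta(\tilde\alpha)$ in the expansion of the highest root. Everything reduces to a short count; I do not expect any substantive obstacle, so the main task is to set up the bookkeeping cleanly.

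First I would use~\eqref{eq:lieP}: the set $\Pi\setminus J=t(P(\lambda))$ is exactly the system of simple roots of the Levi subgroup $C_G(\lambda)=P(\lambda)\cap P(-\lambda)$. Consequently, for every $\beta\in\Pi\setminus J$ the root subspace $\Lie(G)_\beta$ lies in $\LL_0$, while for $\beta\in J$ it has some strictly positive $\lambda$-weight $w_\beta$. Because the grading is $5$-graded, evaluation at $\alpha=\beta$ itself forces $w_\beta\in\{1,2\}$. More generally, for any root $\alpha=\sum_{\beta\in\Pi}m_\beta(\alpha)\beta$ the weight of $e_\alpha$ under $\lambda$ equals $\sum_{\beta\in J}m_\beta(\alpha)w_\beta$. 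Specializing to $\tilde\alpha$, irreducibility of $\Phi$ (coming from simplicity of $G$) gives $m_\beta(\tilde\alpha)\geq 1$ for every $\beta\in\Pi$, so each summand in
\[
\sum_{\beta\in J}m_\beta(\tilde\alpha)w_\beta\leq 2
\]
is at least $1$. This immediately yields $|J|\leq 2$, giving the dichotomy (a)/(b).

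In case (a) with $J=\{\alpha_1\}$, I would observe that the assumption $\LL_1\oplus\LL_{-1}\neq 0$ forces the existence of a root of $\lambda$-weight $\pm 1$, hence $w_{\alpha_1}=1$ (otherwise all nonzero weights would be even). The bound then becomes $m_{\alpha_1}(\tilde\alpha)\leq 2$, and together with $m_{\alpha_1}(\tilde\alpha)\geq 1$ this yields $m_{\alpha_1}(\tilde\alpha)\in\{1,2\}$. In case (b) with $J=\{\alpha_1,\alpha_2\}$, the inequality $m_{\alpha_1}(\tilde\alpha)w_{\alpha_1}+m_{\alpha_2}(\tilde\alpha)w_{\alpha_2}\leq 2$ combined with the fact that all four factors are at least~$1$ forces $m_{\alpha_1}(\tilde\alpha)=m_{\alpha_2}(\tilde\alpha)=w_{\alpha_1}=w_{\alpha_2}=1$. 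In either case $w_\beta=1$ for every $\beta\in J$, which is exactly the final claim $\lambda(t)\cdot e_\beta=t\,e_\beta$.
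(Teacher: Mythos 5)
Your proof is correct, and it is essentially self-contained where the paper's is not: the paper's own proof simply invokes \cite[Lemma 4.2.2]{BdMS}, which already asserts that the $5$-grading is given by $\alpha\mapsto\sum_{\beta\in J}m_\beta(\alpha)$ (equivalently, that every simple root in $J$ carries $\lambda$-weight $1$), and then concludes exactly as you do from the inequality $\sum_{\beta\in J}m_\beta(\tilde\alpha)\le 2$ together with positivity of all coefficients of the highest root. What you do differently is to re-derive the weight-one property inside the proof: in case (b) it falls out of the same highest-root count, while in case (a) you use the hypothesis $\LL_1\oplus\LL_{-1}\neq 0$ via the parity observation that $w_{\alpha_1}=2$ would make all nonzero $\lambda$-weights even. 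That parity step is exactly where the hypothesis is needed (a weight-$2$ simple root with $m_{\alpha_1}(\tilde\alpha)=1$ is otherwise perfectly consistent with $5$-gradedness), so it is well placed. Two cosmetic points: you should record that $J\neq\emptyset$, which is immediate since $\LL_{\pm1}\neq 0$ forces some root space to have nonzero $\lambda$-weight, and the positivity $w_\beta>0$ for $\beta\in J$ is indeed what \eqref{eq:lieP} gives, since such $\beta$ lie in $\Phi(\GT,\GU_+)$. In sum, your route buys independence from the external lemma at the cost of a few extra lines; the paper's citation buys brevity and also the finer fact that $\Phi\cap\ZZ J$ has type $A_1$, $BC_1$, $A_2$ or $A_1\times A_1$, which, however, is not needed for this statement.
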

\begin{proof}
By~\cite[Lemma 4.2.2]{BdMS}
the root system $\Phi\cap\ZZ J$
is of type $A_1$, $BC_1$, $A_2$, or $A_1\times A_1$, and
for any $-2\leq i\leq 2$ we have
\begin{equation}\label{eq:Li}
\LL_i = \hspace{-2ex}\bigoplus\limits_{\substack{\alpha\in\Phi \colon \\[.6ex] \sum\limits_{\beta\in J}m_\beta(\alpha)=i}}
\hspace*{-2ex}\Lie(G)_\alpha.
\end{equation}
Then
$\lambda(t)e_\alpha=t e_\alpha$  for $\alpha\in J$ by the definition of $\lambda$. Since
$\sum_{\alpha\in J}m_\alpha(\tilde\alpha)\le 2$, the remaining claims are clear.
\end{proof}

\begin{definition}
Assume that the $\ZZ$-grading $\Lie(G)=\bigoplus_{i\in\ZZ}\Lie(G)_i$ of the Lie algebra of a
reductive algebraic group $G$ corresponds to the cocharacter $\lambda:\Gm\to G$.
We define the \emph{type} of the grading to be the type $t(P(\lambda))$
of the corresponding positive parabolic subgroup $P(\lambda)$.
\end{definition}

\begin{theorem}\label{thm:5-class}
Let $G$ be an adjoint simple algebraic group over a field $k$, $\Char k\neq 2,3$.
Let $\LL=\bigoplus\limits_{i=-2}^2\LL_i$ be a $5$-grading on the Lie algebra $\LL=\Lie(G)$ of $G$ such that
$\LL_1\oplus\LL_{-1}\neq 0$. Then $[\LL,\LL]$ is graded-isomorphic to $K(\A)$ for a structurable $k$-algebra $\A$
if and only if the type of grading is the complement of the set $J$ of simple roots of the root system $\Phi$
of $G$ listed in the following table.
\end{theorem}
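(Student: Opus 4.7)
The plan is to reduce the problem to an inspection of the classification of nilpotent orbits (via weighted Dynkin diagrams) in complex simple Lie algebras, using Theorem~\ref{thm:A-e} as the bridge. By that theorem, $[\LL,\LL]$ is graded-isomorphic to $K(\A)$ for a structurable $k$-algebra $\A$ if and only if the weighted Dynkin diagram $\Delta$ associated to the grading (the weight of $\alpha\in\Pi$ is $2$ if $\alpha\in J:=\Pi\setminus t(P(\lambda))$ and $0$ otherwise) coincides with the weighted Dynkin diagram of some nilpotent element in a complex simple Lie algebra of the same type as $G$. Hence the task is to enumerate, for each Dynkin type $\Phi$, exactly those subsets $J\subseteq\Pi$ that (i)~arise from a $5$-grading and (ii)~produce the weighted diagram of some complex nilpotent orbit.

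First I would restrict the candidate subsets $J$ using Lemma~\ref{lem:5-lambda}: either $J=\{\alpha_1\}$ with $m_{\alpha_1}(\tilde\alpha)\in\{1,2\}$, or $J=\{\alpha_1,\alpha_2\}$ with $m_{\alpha_1}(\tilde\alpha)=m_{\alpha_2}(\tilde\alpha)=1$. This bounds the candidate set sharply: for each root system one writes down $\tilde\alpha$ in the Bourbaki basis, reads off the coefficients, and lists the finitely many $J$ that satisfy the constraint. For instance, in $E_8$ the coefficients of $\tilde\alpha$ are $(2,3,4,6,5,4,3,2)$, so only the simple root with coefficient $2$ (and no pair) can appear in $J$; in $E_7$ one gets the root of coefficient $2$ as well as the two pairs with coefficients $(1,1)$; in the classical types one systematically examines short nodes and the $BCn$-type case for $J=\{\alpha_1\}$, etc.

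Next, for each such $J$, the associated weighted diagram $\Delta$ has entries $0$ and $2$; I would compare it against the tables of weighted Dynkin diagrams of nilpotent orbits (so-called \emph{even} nilpotent orbits). For the exceptional types I would use the explicit tables of Bala--Carter--Liebeck--Seitz~\cite{LieSe-conj} (Tables 22.1.1--22.1.5); for the classical types I would appeal to the standard partition-parametrisation of nilpotent orbits in $\SL_n$, $\Sp_{2n}$, $\mathsf{SO}_n$ and match diagrams via the known recipe for converting the partition of a nilpotent element into its weighted diagram. Since the classification of weighted diagrams of complex nilpotents is independent of the characteristic (as already used in Theorem~\ref{thm:nilp-orbits}\,(2)), the resulting list of admissible $J$ is a purely combinatorial data depending only on the Dynkin type, producing the table announced in the statement.

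The main obstacle is not conceptual but bookkeeping: keeping the Bourbaki numbering of simple roots consistent across the tables cited from several different sources, and, in the exceptional cases, verifying that each candidate $\Delta$ with entries in $\{0,2\}$ actually appears as a weighted Dynkin diagram of a nilpotent orbit (and, conversely, that no admissible $J$ is overlooked). Special care is needed for doubled nodes in $BC_1$-type gradings (case $m_{\alpha_1}(\tilde\alpha)=2$ in~Lemma~\ref{lem:5-lambda}), where the diagram with a single $2$ on an end node of a classical diagram must be matched against the partition data, and for the two-node cases $J=\{\alpha_1,\alpha_2\}$ in the exceptional types, where the resulting diagram may or may not be even nilpotent. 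Once these case analyses are carried out, the table in the statement is precisely the outcome.
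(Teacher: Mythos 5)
Your plan is essentially the paper's own proof: reduce via Theorem~\ref{thm:A-e} to asking whether the weighted Dynkin diagram with weight $2$ on $J$ and $0$ elsewhere is the diagram of a complex nilpotent orbit, cut down the candidate sets $J$ by Lemma~\ref{lem:5-lambda}, and then verify the exceptional types against the tables of~\cite{LieSe-conj} and the classical types through the partition description of weighted diagrams in~\cite{CMc}. The only part not carried out is the classical-type partition bookkeeping itself (which is where the explicit constraints in the table, e.g.\ $1\le i\le (2l+1)/3$ for $B_l$, actually come from), but the method you describe for it is exactly the one the paper uses.
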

{\renewcommand{\arraystretch}{1.3}

\begin{tabular}{r|l}
$\Phi$& $J$\\
\cline{1-2}
$A_l$, $l\ge 1$ & $\{\alpha_i,\alpha_{l+1-i}\}$, $1\le i\le (l+1)/3$;\\
 & $\{\alpha_{l+1/2}\}$, if $l$ is odd\\
\cline{1-2}
$B_l$, $l\ge 2$ & $\{\alpha_i\}$, $1\le i\le (2l+1)/3$\\
\cline{1-2}
$C_l$, $l\ge 3$ & $\{\alpha_{i}\}$, $1\le i\le 2l/3$, $i$ is even; $\{\alpha_l\}$\\
\cline{1-2}
$D_l$, $l\ge 4$ & $\{\alpha_i\}$, $1\le i\le 2l/3$; \\
 & $\{\alpha_{l-1}\}$ and $\{\alpha_l\}$, if $l$ is even\\
\cline{1-2}
$E_6$ & $\{\alpha_1,\alpha_6\}$; $\{\alpha_2\}$\\
\cline{1-2}
$E_7$ & $\{\alpha_1\}$; $\{\alpha_2\}$; $\{\alpha_6\}$; $\{\alpha_7\}$\\
\cline{1-2}
$E_8$ & $\{\alpha_1\}$; $\{\alpha_8\}$\\
\cline{1-2}
$F_4$ & $\{\alpha_1\}$; $\{\alpha_4\}$\\
\cline{1-2}
$G_2$ & $\{\alpha_2\}$\\
\end{tabular}
}

\begin{proof}
Let $\lambda:\Gm\to G$
be the cocharacter of $G$ cooresponding to the grading. By Theorem~\ref{thm:A-e}
it remains to check if $2\lambda$
is a $k$-cocharacter corresponding to a weighted Dynkin diagram of a nilpotent element in the complex case.
By Lemma~\ref{lem:5-lambda} one has $2\lambda(t)\cdot e_\alpha=t^2e_\alpha$ for all $\alpha\in J$ and $2\lambda(t)\cdot e_\alpha=t^0e_\alpha$
for all $\alpha\in\Pi\setminus J$.

If $\Phi$ is of exceptional type, then one readily checks that in all cases $2\lambda$ is as required, since its weighted Dynkin diagram
occurs in the Tables 22.1.1--22.1.5 of nilpotent conjugacy classes in~\cite{LieSe-conj}.

Assume $\Phi$ is of classical type. We use the descriptions of weighted Dynkin diagrams of nilpotent orbits given
in~\cite{CMc}.

{\bf Case $\Phi=A_l$.} In the notation of~\cite[\S~3.6]{CMc}, we have $n=l+1$ and
$h_1-h_n=2|J|\in\{2,4\}$. We need to describe suitable
non-negative partitions $n=\sum_{i=1}^n d_i$. By definition,  $d_i\ge 1$ for $1\le d_i\le k$ and
$d_i=0$ for $k+1\le i\le n$.

If $h_1-h_n=2$, then $d_i\in \{1,2\}$ for all $1\le i\le k$.
Then the sequence $h_1\ge h_2\ge\ldots\ge h_n$
contains numbers $1$ and $-1$ repeated $m\ge 1$ times each, and $k-m$ zeroes.
Since $h_i-h_{i+1}=2$ only for one $i$, we have $k-m=0$ and $n=2k$.
In particular, $l$ is odd, and $J=\{\alpha_{k}\}=\{\alpha_{(l+1)/2}\}$.

If $h_1-h_n=4$, then $d_i\in\{1,2,3\}$ for all $1\le i\le k$.
Then the sequence $h_1\ge h_2\ge\ldots\ge h_n$
contains numbers $2,0,-2$ repeated $m\ge 1$ times each, numbers $1,-1$ repeated $m'$ times each, and
$k-m-m'$ zeroes. Since $h_i-h_{i+1}=2$ for exactly two $i$'s, we conclude that $m'=0$. Then $l+1=n=3m+(k-m)$, hence
$l+1\ge 3m$ and $J=\{\alpha_m,\alpha_{l+1-m}\}$, as required.

{\bf Case $\Phi=B_l$.} In the notation of~\cite[Lemma 5.3.3]{CMc}, we have $n=l$. Nilpotent orbits
are classified by partitions $2n+1=\sum_{i=1}^{2n+1}d_i$ of $2n+1$ in which even parts occur with even
multiplicity~\cite[Theorem 5.1.2]{CMc}. In~\cite[Lemma 5.3.3]{CMc}, $h_1$ is the sum of labels of the
weighted Dynkin diagram, hence $h_1=2$. Then $d_i\in\{1,2,3\}$. If $h_n=2$, then
$h_1=h_2=\ldots=h_n=2$, which is not possible, since there are not enough zeroes.
Hence the label $2$ occurs only for one $i$ between $1$ and $n-1$, and $h_1=\ldots=h_i=2$, $h_{i+1}=\ldots=h_n=0$.
Then partition consists of $i$ numbers $3$ and $2n+1-3i$ numbers $1$. The condition on parity is void.
The set $\{0,\pm h_1,\ldots,\pm h_n\}$ contains $i+(2n+1-3i)$ zeroes, therefore, $2(n-i)\ge i-1$,
which implies $3i\le 2n+1$.

{\bf Case $\Phi=C_l$.} In the notation of~\cite{CMc}, we have $n=l$. Nilpotent orbits
are classified by partitions $2n=\sum_{i=1}^{2n}d_i$ of $2n$ in which odd parts occur with even
multiplicity~\cite[Theorem 5.1.3]{CMc}. In~\cite[Lemma 5.3.1]{CMc}, $h_1+h_n=2$ is the sum of labels of the
weighted Dynkin diagram. Since $2h_n$ is the label of $\alpha_l$, we have $h_n=0$ or $h_n=1$.

If $h_n=1$, then $h_1=1$, and hence $h_i=1$ for all $i$. Then $d_i=2$ for all non-zero $d_i$,
hence the partition is $2n=2+\ldots+2$. The condition on odd parts is satisfied, hence $J=\{\alpha_l\}$ is valid.

If $h_n=0$, then $h_1=2$ and $d_i\in\{1,2,3\}$ for $1\le i\le 2n$. Since $h_i-h_{i+1}=2$ only for one $i$ between $1$
and $n-1$, we conclude that $h_1=\ldots=h_i=2$ and $h_{i+1}=\ldots=h_{n-1}=0$. Then
$2n$ is partitioned into
the sum of $i$ times number $3$, and $2n-3i\ge 0$ times number $1$, where $i$ is even. Summing up, $J=\{\alpha_i\}$
with even $i=2m$, $1\le m\le l/3$, or $J=\{\alpha_l\}$.

{\bf Case $\Phi=D_l$.} In our notation, $l=n$. Nilpotent orbits
are classified by partitions $2n=\sum_{i=1}^{2n}d_i$ of $2n$ in which even parts occur with even
multiplicity, except that "very even"{} partitions with only even parts (each having even multiplicity) correspond to two
different
orbits~\cite[Theorem 5.1.4]{CMc}.
Since all weights of the Dynkin diagram in our seting are even, all numbers $h_1,\ldots,h_n$ have the same parity.
Assume first that the partition is not very even, i.e. the numbers $d_i$ are odd, and the numbers $h_i$ are even.
By~\cite[Lemma 5.3.4]{CMc}
the sum of labels
on the Dynkin diagram equals $h_1+h_{n-1}\in\{2,4\}$.
Since $h_1\ge h_{n-1}$, we have $h_1=2$, $h_{n-1}=0$ or
$h_1=h_{n-1}=2$.
If $h_1=h_{n-1}=2$, then $h_1=h_2=\ldots=h_{n-1}=2$ and $h_n=0$, which is not possible (not enough zeroes).
Hence $h_1=2$, $h_{n-1}=0$. Then $h_n=0$ as well. The partition consists of $i$ times $3$, where $i\ge 1$,
 and $2n-3i$ numbers $1$.
The multiplicity condition is void. Since numbers $3$ produce triples $2,0,-2$, one has $2n\ge 3i$.
This case corresponds to $J=\{\alpha_i\}$, $1\le i\le 2n/3$.

Assume the partition is very even. Then by~\cite[Lemma 5.3.5]{CMc} the sum of labels of vertices $\alpha_n$ and $\alpha_{n-1}$
equals $2$. Then either all other labels are $0$, or there is label $2$ at $\alpha_1$. In the first case we have
$h_1=h_2=\ldots=h_{n-1}$, and since all $d_i$ are even, this implies that the partition is $n$ numbers $2$. Since it
is very even, $n$ is even. In the second case $h_1-h_2=2$, $h_2=\ldots=h_{n-1}$. Since
$h_1\ge h_2\ge h_n$, then $d_1=h_1+1$ can only have multiplicity one, which is wrong. Therefore, this case
does not take place in our setting. Summing up, very even partitions occur with $J=\{\alpha_{n-1}\}$ and
$J=\{\alpha_n\}$ for $n$ even.
\end{proof}

\begin{proof}[Proof of Theorem~\ref{thm:1}]
The implication $\mathit{(3)\implies (2)}$ is obvious.
The implication $\mathit{(2)\implies (1)}$ follows from Theorem~\ref{thm:Che-type}.
It remains to prove $\mathit{(1)\implies (3)}$. By assumption, $\LL$ is a central simple Chevalley type Lie algebra
with a non-trivial $\ZZ$-grading. Let $\GS$ be the grading torus of $\LL$, and let $\varphi:\GS\to\Aut(\LL)$ be the natural
embedding of algebraic $k$-groups. The algebraic group $G=\Aut(\LL)^\circ$ is an adjoint simple algebraic
group, and $\LL=[\Lie(G),\Lie(G)]$ (cf.~\cite[Lemma 4.1.6]{BdMS}).
Since $\GS$ is a 1-dimensional split torus, it follows that $G$ contains a 1-dimensional split torus,
i.e. is isotropic. The possible types of minimal parabolic $k$-subgroups of $G$
are given by the Tits index of $G$. Considering the classification of possible Tits indices~\cite{Tits66,PS-tind},
one readily sees that in all cases there is a parabolic subgroup type $\Pi\setminus J$ listed
in Theorem~\ref{thm:5-class} that is preserved by the $*$-action and contains the type of a minimal parabolic $k$-subgroup
of $G$. Then $G$ contains at least one parabolic $k$-subgroup $P(\lambda)$ whose type is $\Pi\setminus J$~\cite[2.5.4]{Tits66}.
Then by Theorem~\ref{thm:A-e} $\LL$ is graded-isomorphic to $K(\A)$ for a structurable algebra $\A$ over $k$.
\end{proof}

\renewcommand{\refname}{References}

\end{document}